\newtheorem{thm}{Theorem}[section]
\newtheorem{lem}[thm]{Lemma}
\newtheorem{prop}[thm]{Proposition}
\newtheorem{cor}[thm]{Corollary}
\theoremstyle{definition}
\newtheorem{dfn}[thm]{Definition}
\newtheorem{rem}[thm]{Remark}
\newtheorem{ex}[thm]{Example}
\theoremstyle{remark}
\numberwithin{equation}{thm}
\def\cx{\operatorname{cx}}
\def\depth{\operatorname{depth}}
\def\Ext{\operatorname{Ext}}
\def\G{\mathcal{G}}
\def\gcx{\operatorname{gcx}}
\def\gdim{\operatorname{Gdim}}
\def\ge{\geqslant}
\def\Hom{\operatorname{Hom}}
\def\im{\operatorname{Im}}
\def\le{\leqslant}
\def\m{\mathfrak{m}}
\def\mod{\operatorname{mod}}
\def\modo{\operatorname{mod_0}}
\def\ured{\operatorname{\mathsf{{}^\ast\!red}}}
\def\N{\mathbb{N}}
\def\op{\mathrm{op}}
\def\pd{\operatorname{pd}}
\def\px{\operatorname{px}}
\def\red{\operatorname{\mathsf{red}}}
\def\syz{\mathrm{\Omega}}
\def\Tor{\operatorname{Tor}}
\def\tr{\operatorname{Tr}}
\def\X{\mathcal{X}}
\begin{document}
\allowdisplaybreaks
\title{On reducing homological dimensions over noetherian rings}
\author{Tokuji Araya}
\address{Department of Applied Science, Faculty of Science, Okayama University of Science, Ridaicho, Kitaku, Okayama 700-0005, Japan}
\email{araya@das.ous.ac.jp}
\author{Ryo Takahashi}
\address{Graduate School of Mathematics, Nagoya University, Furocho, Chikusaku, Nagoya 464-8602, Japan}
\email{takahashi@math.nagoya-u.ac.jp}
\urladdr{https://www.math.nagoya-u.ac.jp/~takahashi/}
\thanks{2020 {\em Mathematics Subject Classification.} 13D05, 13H10, 16E10}
\thanks{{\em Key words and phrases.} AB ring, (Auslander) transpose, complete intersection, Gorenstein ring, $n$-torsionfree module, (reducible) complexity, (reducing) Gorenstein dimension, (reducing) projective dimension, syzygy, totally reflexive module}
\thanks{Ryo Takahashi was partly supported by JSPS Grant-in-Aid for Scientific Research 19K03443}
\begin{abstract}
Let $\Lambda$ be a left and right noetherian ring.
First, for $m,n\in\N\cup\{\infty\}$, we give equivalent conditions for a given $\Lambda$-module to be $n$-torsionfree and have $m$-torsionfree transpose.
Using them, we investigate totally reflexive modules and reducing Gorenstein dimension.
Next, we introduce homological invariants for $\Lambda$-modules which we call upper reducing projective and Gorenstein dimensions.
We provide an inequality of upper reducing projective dimension and complexity when $\Lambda$ is commutative and local.
Using it, we consider how upper reducing projective dimension relates to reducing projective dimension, and the complete intersection and AB properties of a commutative noetherian local ring.
\end{abstract}
\maketitle
\section{Introduction}

Araya and Celikbas \cite{AC} introduce homological invariants for modules called reducing homological dimensions.
In this paper, we study two of them: reducing projective dimension and reducing Gorenstein dimension.

Let $\Lambda$ be a left and right noetherian ring.
For nonnegative integers $m,n$ we say that a finitely generated right $\Lambda$-module $M$ is {\em $(m,n)$-torsionfree} if $\Ext_\Lambda^i(M,\Lambda)=\Ext_{\Lambda^\op}^j(\tr M,\Lambda)=0$ for all $1\le i\le m$ and $1\le j\le n$.
This is none other than an $n$-torsionfree module whose transpose is $m$-torsionfree.
The notion of an $(m,n)$-torsionfree module is naturally extended to the case where $m=\infty$ or $n=\infty$.
We provide criteria for the $(m,n)$-torsionfree property, part of which is the following.

\begin{thm}\label{14}
Let $\Lambda$ be a left and right noetherian ring.
Let $m,n\in\N\cup\{\infty\}$.
A finitely generated right $\Lambda$-module $M$ is $(m,n)$-torsionfree if and only if it admits an exact sequence
$$
G_{m+1}\to G_m\to\cdots\to G_0\xrightarrow{\partial}G_{-1}\to\cdots\to G_{-n}
$$
of $(m,n)$-torsionfree $\Lambda$-modules whose $\Lambda$-dual is also exact such that $M$ is isomorphic to the image of $\partial$.
\end{thm}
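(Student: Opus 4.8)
My plan is to prove the two implications by reducing to short exact sequences and dimension shifting, the organizing remark being that every projective module is $(m,n)$-torsionfree (since $\Ext_\Lambda^i(P,\Lambda)=0$ for $i\ge1$ and $\tr P=0$): so for the forward implication it suffices to produce a sequence of \emph{projective} modules, while for the backward one the hypotheses on the $G_i$ provide exactly the $\Ext$-vanishing that the shifts require. Besides this I will use the standard fact that for a projective resolution $\cdots\to P_1\to P_0\to N\to0$ the homology of the dual complex $P_0^*\to P_1^*\to\cdots$ in degree $i$ is $\Ext_\Lambda^i(N,\Lambda)$, and the Auslander--Bridger identification $\Ext_{\Lambda^\op}^1(\tr N,\Lambda)=\ker(\sigma_N\colon N\to N^{**})$ (so $N$ is torsionless iff $\sigma_N$ is injective).

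For the backward implication, suppose such a sequence is given. Split the left half $G_{m+1}\to\cdots\to G_0\to M\to0$ into short exact sequences $0\to Z_j\to G_j\to Z_{j-1}\to0$ with $Z_{-1}=M$. Comparing the dual of the whole sequence with the duals of these, and using $\Ext_\Lambda^i(G_j,\Lambda)=0$ for $1\le i\le m$, one gets $\Ext_\Lambda^1(Z_j,\Lambda)=0$ for $-1\le j\le m-2$; dimension shifting through the same short exact sequences then gives $\Ext_\Lambda^i(M,\Lambda)=0$ for $1\le i\le m$. For the transpose condition, split the right half $0\to M\to G_{-1}\to\cdots\to G_{-n}$ into $0\to C_{k-1}\to G_{-k}\to C_k\to0$ with $C_0=M$; the exactness of the dual of the whole sequence again forces each of these to dualize to a short exact sequence. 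I would then induct on $n$: for $n=1$ the inclusion $M\hookrightarrow G_{-1}$ is torsionless because $G_{-1}$ is; for $n\ge2$, $C_1$ inherits a length-$(n-1)$ sequence with exact dual, hence is $(n-1)$-torsionfree by induction, and from $0\to M\to G_{-1}\to C_1\to0$ — with $G_{-1}$ $n$-torsionfree, $C_1$ $(n-1)$-torsionfree, and the sequence dualizing to a short exact sequence — a dual-horseshoe construction yields a length-$n$ co-resolution of $M$ by projectives with exact $\Lambda$-dual, which forces $M$ to be $n$-torsionfree.

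For the forward implication, suppose $M$ is $(m,n)$-torsionfree. Fix a projective presentation $P_1\xrightarrow{f}P_0\to M\to0$, extend it to a projective resolution of $M$, and also form a projective resolution of $\tr M$ by splicing $P_0^*\xrightarrow{f^*}P_1^*\to\tr M\to0$ with a projective resolution $\cdots\to R_1\to R_0\to M^*\to0$ of $M^*=\ker f^*$. Apply $\Hom_{\Lambda^\op}(-,\Lambda)$ to the resolution of $\tr M$, identify $P_i^{**}$ with $P_i$, and use $\Ext_{\Lambda^\op}^j(\tr M,\Lambda)=0$ for $1\le j\le n$ — in particular $\ker\sigma_M=0$ for injectivity at the junction — to obtain an exact sequence $0\to M\to R_0^*\to R_1^*\to\cdots\to R_{n-1}^*$ whose $\Lambda$-dual is the truncation $R_{n-1}\to\cdots\to R_0\to M^*\to0$ of the chosen resolution, and so is exact. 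Splicing this with the truncation $P_{m+1}\to\cdots\to P_0\to M\to0$ of the resolution of $M$ along $P_0\twoheadrightarrow M\hookrightarrow R_0^*$ produces a sequence of projective (hence $(m,n)$-torsionfree) modules with $M=\im\partial$; the exactness of this sequence and of its $\Lambda$-dual is what the two constructions just supply. The cases $m=\infty$ or $n=\infty$ are handled by using full resolutions in place of the truncations.

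The step I expect to be hardest is the transpose half of the backward implication — passing from the bare data (the $G_i$ being $(m,n)$-torsionfree, the $\Lambda$-dual of the whole sequence being exact) to a genuine length-$n$ projective co-resolution of $M$ with exact $\Lambda$-dual. This requires carefully tracking how exactness of the big dual propagates to the dualized short exact sequences, and then running a horseshoe-type construction that stays compatible with $(-)^*$ at every step. A smaller but real point is checking, in the forward implication, that the two halves actually glue into one complex whose dual is again exact; this rests on the identifications $P_i^{**}\cong P_i$ and on $\ker\sigma_M=0$.
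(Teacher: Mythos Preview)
Your forward implication and the $\G_{m0}$ half of the backward implication are essentially the paper's argument: the paper proves (1)$\Rightarrow$(2) by taking a projective resolution on the left and citing \cite[Theorem (2.17)]{AB} for the right-hand co-resolution (which is exactly what your explicit construction via a resolution of $\tr M$ produces), and it proves $M\in\G_{m0}$ by the same induction and dimension-shift you describe.

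The divergence is in the $\G_{0n}$ half of the backward direction, and here your ``dual-horseshoe'' step is a real gap. The horseshoe lemma manufactures a (co-)resolution of the \emph{middle} term of a short exact sequence from those of the two outer terms; but in $0\to M\to G_{-1}\to C_1\to 0$ the module $M$ is the kernel, so no horseshoe applies directly. One can try to reduce to a genuine horseshoe situation by first embedding $G_{-1}$ (hence $M$) into a projective $P$ with $(n-1)$-torsionfree cokernel $D$, so that $P/M$ sits in $0\to C_1\to P/M\to D\to 0$ with $P/M$ now the middle term; but running a horseshoe for \emph{projective} co-resolutions is not automatic---the required extension $P/M\to R^1$ of $C_1\to R^1$ exists only after invoking the exact-dual hypothesis to lift on the dual side, and one must then check that exactness of the dual propagates through every stage. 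None of this is in your sketch, and it is more work than you suggest.

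The paper avoids all of this by using transposes directly. From \cite[Lemma (3.9)]{AB} one has (up to projective summands) an exact sequence
\[
0\to C_1^\ast\to G_{-1}^\ast\to M^\ast\xrightarrow{\delta}\tr C_1\to\tr G_{-1}\to\tr M\to 0,
\]
and the surjectivity of $G_{-1}^\ast\to M^\ast$ (which you already extracted from exactness of the big dual) forces $\delta=0$, yielding a short exact sequence $0\to\tr C_1\to\tr G_{-1}\to\tr M\to 0$. The long exact sequence of $\Ext_{\Lambda^\op}(-,\Lambda)$, together with $G_{-1}\in\G_{0n}$ and (by induction) $C_1\in\G_{0,n-1}$, immediately gives $\Ext_{\Lambda^\op}^i(\tr M,\Lambda)=0$ for $2\le i\le n$; the case $i=1$ is your torsionless base case. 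This replaces the horseshoe construction by a single citation and a long exact sequence, and is the missing idea in your argument.
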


Applying this theorem, we obtain the following corollary.
The first statement is included in a theorem of Sather-Wagstaff, Sharif and White \cite{SSW} if $\Lambda$ is commutative.
The second statement includes a theorem of Araya and Celikbas \cite{AC}, which shows the same assertion under the additional assumption that $\Lambda$ is commutative and local.
Furthemore, our proof highly simplifies theirs.

\begin{cor}\label{15}
Let $\Lambda$ be a left and right noetherian ring.
Let $M$ be a finitely generated right $\Lambda$-module.
\begin{enumerate}[\rm(1)]
\item
The module $M$ is totally reflexive if and only if there is an exact sequence of totally reflexive $\Lambda$-modules
$$
\cdots\to G_1\to G_0\xrightarrow{\partial}G_{-1}\to G_{-2}\to\cdots
$$
whose $\Lambda$-dual is also exact such that $M$ is isomorphic to the image of $\partial$.
\item
If $M$ has finite reducing Gorenstein dimension, then $\gdim_\Lambda M=\sup\{i\in\N\mid\Ext_\Lambda^i(M,\Lambda)=0\}$.
\end{enumerate}
\end{cor}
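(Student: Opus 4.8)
For statement (1), the plan is to derive it from Theorem~\ref{14} by taking $m=n=\infty$. Recall first the standard fact (essentially Auslander--Bridger) that a finitely generated right $\Lambda$-module is totally reflexive precisely when it is $(\infty,\infty)$-torsionfree; the nontrivial direction combines the Auslander exact sequence $0\to\Ext^1_{\Lambda^\op}(\tr M,\Lambda)\to M\to M^{**}\to\Ext^2_{\Lambda^\op}(\tr M,\Lambda)\to0$ with the identification of $\Hom_\Lambda(M,\Lambda)$ with a second syzygy of $\tr M$ over $\Lambda^\op$ up to projective summands. Granting this equivalence, statement (1) is precisely the $m=n=\infty$ case of Theorem~\ref{14}, once one observes that there the exact sequence $G_{m+1}\to\cdots\to G_{-n}$ becomes the doubly infinite complex $\cdots\to G_1\to G_0\xrightarrow{\partial}G_{-1}\to\cdots$ of $(\infty,\infty)$-torsionfree --- hence totally reflexive --- $\Lambda$-modules with exact $\Lambda$-dual and $M\cong\im\partial$.

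For statement (2), put $s=\sup\{i\in\N\mid\Ext^i_\Lambda(M,\Lambda)\neq0\}$. The inequality $\gdim_\Lambda M\ge s$ requires no hypothesis: if $\gdim_\Lambda M$ is finite it equals $\sup\{i\mid\Ext^i_\Lambda(M,\Lambda)\neq0\}$ by the Auslander--Bridger equality for modules of finite Gorenstein dimension, while if it is infinite there is nothing to prove. If $s=\infty$ then $\gdim_\Lambda M=\infty$ (otherwise $s$ would be finite), so we may assume $s<\infty$, that is, $\Ext^i_\Lambda(M,\Lambda)=0$ for all $i>s$. The assertion then reduces to the implication: \emph{if $M$ has finite reducing Gorenstein dimension and $\Ext^i_\Lambda(M,\Lambda)=0$ for $i\gg0$, then $\gdim_\Lambda M<\infty$}. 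Indeed, granting this we get $\gdim_\Lambda M<\infty$, and the Auslander--Bridger equality gives $\gdim_\Lambda M=\sup\{i\mid\Ext^i_\Lambda(M,\Lambda)\neq0\}=s$.

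To prove that implication I would induct on the length of a shortest Gorenstein-dimension reduction of $M$. When the length is $0$ there is nothing to do, $\gdim_\Lambda M$ being already finite. Otherwise, take the defining short exact sequence of the first reduction step, say $0\to N^{\oplus b}\to X\to\syz^a M\to0$ with $X$ totally reflexive, $b\ge1$, $a\ge0$, where $N$ admits a strictly shorter reduction and hence has finite reducing Gorenstein dimension. Applying $\Hom_\Lambda(-,\Lambda)$ and using $\Ext^i_\Lambda(X,\Lambda)=0$ for $i\ge1$, the long exact sequence gives $\Ext^i_\Lambda(N,\Lambda)^{\oplus b}\cong\Ext^{i+1}_\Lambda(\syz^a M,\Lambda)\cong\Ext^{i+1+a}_\Lambda(M,\Lambda)$ for $i\ge1$, which vanishes for $i\gg0$; so $N$ satisfies the inductive hypothesis and $\gdim_\Lambda N<\infty$. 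Putting $\gdim_\Lambda N^{\oplus b}<\infty$ and $\gdim_\Lambda X=0$ back into the same short exact sequence forces $\gdim_\Lambda\syz^a M<\infty$ by the standard behavior of Gorenstein dimension along short exact sequences, whence $\gdim_\Lambda M<\infty$.

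Corollary~\ref{15}(1) is what makes this legitimate in the present generality: it is the complete-resolution description of totally reflexive modules on which the Gorenstein-dimension calculus used above rests, and it also enables a more direct variant in which one glues a projective resolution of $M$, the reduction short exact sequences, and complete resolutions of the totally reflexive modules occurring, into a single doubly infinite exact complex with $M$ as the image of its central map, from which one reads off total reflexivity of an appropriate syzygy of $M$. I expect the main obstacle to be exactly this interface: fixing the precise shape of the short exact sequence attached to a Gorenstein-dimension reduction and carrying the syzygy shifts and multiplicities correctly through the long exact sequences of $\Ext$, and, in the direct variant, checking that the assembled complex has exact $\Lambda$-dual. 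The classical inputs used on the way --- the Auslander--Bridger equality for modules of finite Gorenstein dimension, the short-exact-sequence estimate for Gorenstein dimension, and stability of total reflexivity under syzygies --- should be cited or re-derived from Theorem~\ref{14}.
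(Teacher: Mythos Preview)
Your treatment of part (1) is correct and matches the paper: it is exactly the case $m=n=\infty$ of Theorem~\ref{14}.

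For part (2), however, there is a genuine gap, and it is precisely the ``interface'' you flag at the end. The shape of the reduction sequence you wrote down is not the one in the definition. A single step of a Gorenstein-dimension reduction has the form
\[
0\longrightarrow M^{\oplus a}\longrightarrow M_1\longrightarrow \syz^{n}M^{\oplus b}\longrightarrow 0,
\]
with the module $M$ appearing on \emph{both} ends and the middle term $M_1$ the one whose reducing Gorenstein dimension has dropped. In particular the middle term is not totally reflexive at the first step, so your long-exact-sequence argument using $\Ext^i_\Lambda(X,\Lambda)=0$ collapses. With the correct sequence the induction hypothesis only gives $\gdim_\Lambda M_1<\infty$, and the standard two-out-of-three estimate for Gorenstein dimension along a short exact sequence does \emph{not} let you conclude $\gdim_\Lambda M<\infty$: the two outer terms are both built from $M$, so you are trying to bound $\gdim_\Lambda M$ in terms of itself.

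The paper's proof confronts exactly this obstruction. After passing to $s$th syzygies so that $M\in\G_{\infty 0}$ and $M_1\in\G_{\infty\infty}$, it iterates the sequence (taking syzygies and splicing) to manufacture, for every $i\ge1$, an exact sequence
\[
0\to M^{\oplus a^i}\to G_{i-1}\to\cdots\to G_1\to G_0\to \syz^{in}M^{\oplus b^i}\to 0
\]
with each $G_j$ totally reflexive and with exact $\Lambda$-dual. Theorem~\ref{14} then forces $M^{\oplus a^i}\in\G_{0i}$ for every $i$, whence $M\in\G_{0\infty}$ and thus $M\in\G_{\infty\infty}$. Your ``direct variant'' in the last paragraph gestures toward this, but the essential point---that one must iterate to get arbitrarily long acyclic complexes before invoking Theorem~\ref{14}---is the missing idea.
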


Next, modifying the definition of reducing projective dimension, we introduce upper reducing projective dimension, which is always at least reducing projective dimension.
We prove the following theorem, which relates it to complexity.

\begin{thm}\label{16}
Let $R$ be a commutative noetherian local ring.
Let $M$ be a finitely generated $R$-module.
Then the upper reducing projective dimension of $M$ is more than or equal to the complexity of $M$.
If $M$ has reducible complexity (this is the case when $R$ is a complete intersection), then both are equal.
\end{thm}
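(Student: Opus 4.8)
The plan is to play off the polynomial growth of Betti numbers against the combinatorics of reductions. First I would collect the standard stability properties of complexity: $\cx_R M=\cx_R M^{\oplus b}=\cx_R\syz^a M$ for every $b\ge1$ and $a\in\N$, and subadditivity along a short exact sequence $0\to A\to B\to C\to0$, namely that any one of $\cx_R A,\cx_R B,\cx_R C$ is bounded above by the maximum of the other two. All of these follow at once from the long exact sequence of $\Tor^R_\ast(-,k)$ together with the description of $\cx_R M$ as the order of polynomial growth of the Betti sequence $(\beta^R_i(M))_{i\ge0}$.

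The heart of the argument, and the step I expect to be the main obstacle, is the following estimate for a single reduction: if $N$ is a reduction of $M$, fitting into an exact sequence of the shape
$$
0\to M^{\oplus b}\to N\to\syz^a M^{\oplus b}\to0\qquad(b\ge1,\ a\in\N),
$$
then $\cx_R M-1\le\cx_R N\le\cx_R M$. The upper bound is subadditivity, using $\cx_R\syz^a M^{\oplus b}=\cx_R M$. For the lower bound I would apply $-\otimes_R k$, use $\Tor^R_j(\syz^a M^{\oplus b},k)\cong\Tor^R_{j+a}(M^{\oplus b},k)$ for $j\ge1$, and extract from the long exact sequence the inequality $b\,\beta^R_{j+a}(M)\le\beta^R_j(N)+b\,\beta^R_{j-1}(M)$ for $j\gg0$; iterating this roughly $m/(a{+}1)$ times bounds $\beta^R_m(M)$ by a constant multiple of $m\cdot\max_{j\le m}\beta^R_j(N)$ for $m\gg0$, so the Betti sequence of $M$ grows by at most one polynomial degree more than that of $N$. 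The delicate point is simply to make sure that the full generality of a reduction as defined — with its syzygy shift and multiplicity — still costs only one unit of complexity.

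Granting the estimate, the inequality drops out. If the upper reducing projective dimension of $M$ is infinite there is nothing to prove; otherwise fix a chain $M=M_0,M_1,\dots,M_t$ in which each $M_{i+1}$ is a reduction of $M_i$ and $M_t$ has finite projective dimension, so $\cx_R M_t=0$. Then $\cx_R M_i\le\cx_R M_{i+1}+1$ for every $i$, whence $\cx_R M\le t$; since the upper reducing projective dimension is built so as to charge at least one unit for each reduction used, it is at least $t$, hence at least $\cx_R M$. Passing to the infimum over all such chains gives the asserted inequality (and shows, in passing, that finiteness of the upper reducing projective dimension forces finiteness of $\cx_R M$).

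Finally, suppose $M$ has reducible complexity, say $\cx_R M=c$. If $c=0$ then $\pd_R M<\infty$ and both invariants are $0$. If $c\ge1$, the definition of reducible complexity provides an exact sequence $0\to M\to K\to\syz^t M\to0$ with $\cx_R K<\cx_R M$ and $K$ again of reducible complexity; by the estimate above $\cx_R K=c-1$. Iterating $c$ times yields a chain of exactly $c$ reductions terminating at a module of finite projective dimension, each reduction contributing a single unit to the upper reducing projective dimension; hence that dimension is at most $c$, and with the inequality already proved it equals $c=\cx_R M$. When $R$ is a complete intersection this applies to every finitely generated $M$: the Eisenbud cohomology operators make $\Ext_R^\ast(M,M)$ a finitely generated graded module over a polynomial ring over $R$ in variables of degree two, and the Noether-normalization argument of Gulliksen — carried out in exactly this form by Bergh — supplies the reductions, each realizable through a degree-two class and therefore of syzygy shift one. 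Thus the two invariants coincide over any complete intersection.
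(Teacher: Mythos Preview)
Your proposal is correct and follows essentially the same route as the paper's proof: the Betti-number telescoping you sketch (iterating $\beta_{j+a}(M)\le\beta_j(N)+\beta_{j-1}(M)$ about $m/(a{+}1)$ times) is exactly the computation the paper carries out explicitly, and your iteration under reducible complexity matches their induction on $\cx_R M$. One clause is misstated---for a given chain of length $t$ the upper reducing projective dimension is at \emph{most} $t$, not ``at least $t$''---but your next sentence (``passing to the infimum over all such chains'') shows you have the logic right; the generality with multiplicity $b\ge1$ is harmless but unnecessary, since $\ured$ only uses $b=1$.
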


Applying this theorem, we obtain the following corollary, providing some relationships of upper reducing projective dimension with reducing projective dimension, complete intersections and AB rings.

\begin{cor}\label{17}
Let $R$ be a commutative noetherian local ring.
Consider the following conditions.
\begin{enumerate}[\rm(1)]
\item
The local ring $R$ is a complete intersection.
\item
Every finitely generated $R$-module has reducible complexity.
\item
Every finitely generated $R$-module has finite upper reducing projective dimension.
\item
The residue field of $R$ has finite upper reducing projective dimension.
\item
Every finitely generated $R$-module has finite reducing projective dimension.
\item
The local ring $R$ is AB.
\end{enumerate}
Then the implications {\rm(1)} $\Leftrightarrow$ {\rm(2)} $\Leftrightarrow$ {\rm(3)} $\Leftrightarrow$ {\rm(4)} $\Rightarrow$ {\rm(5)} $\Rightarrow$ {\rm(6)} hold.
\end{cor}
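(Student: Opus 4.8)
The plan is to establish the cycle $(1)\Rightarrow(2)\Rightarrow(3)\Rightarrow(4)\Rightarrow(1)$ together with the two extra implications $(3)\Rightarrow(5)$ and $(5)\Rightarrow(6)$; since $(3)$ and $(4)$ become equivalent once the cycle is closed, this yields every asserted implication. The engine throughout is Theorem~\ref{16}, which compares upper reducing projective dimension with complexity.

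For $(1)\Rightarrow(2)$ I would cite the theorem of Bergh that every finitely generated module over a complete intersection has reducible complexity (this is the parenthetical assertion already recorded in Theorem~\ref{16}). For $(2)\Rightarrow(3)$: if a finitely generated $R$-module $M$ has reducible complexity, then in particular $\cx_R M<\infty$, so Theorem~\ref{16} gives that the upper reducing projective dimension of $M$ equals $\cx_R M$ and hence is finite. The implication $(3)\Rightarrow(4)$ is immediate, the residue field $k$ of $R$ being a finitely generated module. For $(4)\Rightarrow(1)$, apply Theorem~\ref{16} to $M=k$: finiteness of the upper reducing projective dimension of $k$ forces $\cx_R k<\infty$, and by Gulliksen's theorem a commutative noetherian local ring whose residue field has finite complexity is a complete intersection. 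This closes the cycle and proves $(1)\Leftrightarrow(2)\Leftrightarrow(3)\Leftrightarrow(4)$.

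For $(3)\Rightarrow(5)$ it suffices to recall that upper reducing projective dimension always dominates reducing projective dimension, so finiteness of the former for every finitely generated module forces finiteness of the latter. For $(5)\Rightarrow(6)$ I would invoke the theorem of Araya and Celikbas \cite{AC} that a commutative noetherian local ring over which every finitely generated module has finite reducing projective dimension is AB; if one wishes to reconstruct it, one first observes that finite reducing projective (hence Gorenstein) dimension of every module, combined with Corollary~\ref{15}(2) applied to $k$ and with the eventual vanishing of $\Ext_R^i(k,R)$, shows that $R$ is Gorenstein, and then reads the uniform $\Ext$-vanishing bound defining the AB property off the reducing reductions.

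Granting Theorem~\ref{16}, which is already in hand, the corollary is essentially an assembly: the comparison inequality of Theorem~\ref{16}, Gulliksen's complexity characterization of complete intersections, and the elementary comparison between the two reducing dimensions. The only step that is not purely formal is $(5)\Rightarrow(6)$, which I expect to be the main obstacle: passing from ``finite reducing projective dimension for all modules'' to a single uniform $\Ext$ bound over all pairs of modules is precisely the substantive input borrowed from \cite{AC}.
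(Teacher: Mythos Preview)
Your cycle $(1)\Rightarrow(2)\Rightarrow(3)\Rightarrow(4)\Rightarrow(1)$ and the step $(3)\Rightarrow(5)$ are exactly what the paper does: Bergh for $(1)\Rightarrow(2)$, Theorem~\ref{16} for $(2)\Rightarrow(3)$ and for $(4)\Rightarrow(1)$ (together with the Gulliksen/Avramov characterization of complete intersections via $\cx_Rk<\infty$), and the trivial comparison $\ured^{\pd}\geqslant\red^{\pd}$ for $(3)\Rightarrow(5)$.

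The step $(5)\Rightarrow(6)$ is where your proposal diverges from the paper, and where it has a genuine gap. First, the paper does \emph{not} cite a ready-made theorem from \cite{AC}; rather it gives a short direct argument, borrowing only an inductive idea from the proof of \cite[5.1]{AC}: given $M,N$ with $\Ext_R^{\gg0}(M,N)=0$, take the reducing sequences $\{0\to M_{i-1}^{\oplus a_i}\to M_i\to\syz^{n_i}M_{i-1}^{\oplus b_i}\to0\}_{i=1}^r$ with $\pd_RM_r<\infty$ and conclude $\sup\{i\mid\Ext_R^i(M,N)\ne0\}=\sup\{i\mid\Ext_R^i(M_r,N)\ne0\}\leqslant\pd_RM_r\leqslant\depth R$. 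Note also that the paper's definition of $\red^{\pd}$ (allowing $n_i\geqslant0$) is weaker than that of \cite{AC}, so condition~(5) here is a priori weaker than in \cite{AC}; a black-box citation would not suffice even if such a theorem were stated there.

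Second, your alternative sketch does not work as written. You invoke ``the eventual vanishing of $\Ext_R^i(k,R)$'' to feed into Corollary~\ref{15}(2), but nothing in hypothesis~(5) gives you that vanishing; it is equivalent to $R$ being Gorenstein, which is what you are trying to deduce. Moreover, Gorenstein alone does not imply AB (there are Gorenstein non-AB examples), so establishing Gorensteinness would not finish the argument anyway. The correct route is the direct $\Ext$-bound above, which bypasses Gorensteinness entirely.
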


The paper is organized as follows.
In Section \ref{12}, we deal with reducing Gorenstein dimension, and for this we investigate $(m,n)$-torsionfree modules.
Theorem \ref{14} and Corollary \ref{15} are proved in this section.
In Section \ref{11}, we make the definitions of upper reducing projective dimension and upper reducing Gorenstein dimension, and explore the former.
Proofs of Theorem \ref{16} and Corollary \ref{17} are stated in this section.
In Section \ref{13}, we study upper reducing Gorenstein dimension, and obtain a similar inequality as in Theorem \ref{16}.

\section{$(m,n)$-torsionfree modules and reducing Gorenstein dimension}\label{12}

In this section, we give a characterization of the $(m,n)$-torsionfree modules in terms of the existence of certain exact sequences.
As applications, we obtain a characterization of the totally reflexive modules and a formula of Gorenstein dimension for modules of finite reducing Gorenstein dimension.

Let $\Lambda$ be a (left and right) noetherian ring.
Denote by $\mod\Lambda$ the category of finitely generated (right) $\Lambda$-modules.
For each $M\in\mod\Lambda$ we denote by $\syz M$ and $\tr M$ the {\em (first) syzygy} and the {\em (Auslander) transpose} of $M$ respectively, for whose details we refer the reader to \cite{AB}; see also \cite[\S1]{radius}.

\begin{dfn}
For $m,n\in\N\cup\{\infty\}$ we denote by $\G_{mn}$ the full subcategory of $\mod\Lambda$ consisting of $\Lambda$-modules $M$ such that $\Ext_\Lambda^i(M,\Lambda)=0$ for all $1\le i\le m$ and $\Ext_{\Lambda^\op}^j(\tr M,\Lambda)=0$ for all $1\le j\le n$.
When $m=\infty$ (resp. $n=\infty$), the vanishing condition reads $\Ext_\Lambda^i(M,\Lambda)=0$ for all $i\ge1$ (resp. $\Ext_{\Lambda^\op}^j(\tr M,\Lambda)=0$ for all $j\ge1$).
Note that $\G_{00}=\mod\Lambda$.
We call the modules belonging to $\G_{mn}$ the {\em $(m,n)$-torsionfree} modules.
The $(0,n)$-torsionfree modules are nothing but the {\em $n$-torsionfree} modules in the sense of Auslander and Bridger \cite{AB}.
The $(\infty,\infty)$-torsionfree modules are none other than the {\em totally reflexive} (or {\em Gorenstein projective}) modules in the sense of Avramov and Martsinkovsky \cite{AM} (and Enochs and Jenda \cite{EJ}).
There are inclusions $\G_{m\infty}\subseteq\cdots\subseteq\G_{m,n+1}\subseteq\G_{mn}\subseteq\cdots\subseteq\G_{m0}$ and $\G_{\infty n}\subseteq\cdots\subseteq\G_{m+1,n}\subseteq\G_{mn}\subseteq\cdots\subseteq\G_{0n}$, and there is an equality $\G_{mn}=\G_{m0}\cap\G_{0n}$.
\end{dfn}

It is the main subject of \cite{AB} to investigate the $(m,n)$-torsionfree modules; a lot of deep results on those modules are obtained there.
Also, the subcategories $\G_{mn}$ are closely related to one another, whose stable categories involve equivalences and dualities given by $\syz$ and $\tr$; see \cite[Proposition 1.1.1]{I}.

Put $(-)^\ast=\Hom_\Lambda(-,\Lambda)$.
The main result of this section is the following theorem.

\begin{thm}\label{3}
Let $\Lambda$ be a noetherian ring.
Let $M$ be a finitely generated $\Lambda$-module.
The following are equivalent for $m,n\in\N\cup\{\infty\}$.
\begin{enumerate}[\rm(1)]
\item
The $\Lambda$-module $M$ belongs to $\G_{mn}$, that is to say, $M$ is $(m,n)$-torsionfree.
\item
There exists an exact sequence of finitely generated projective $\Lambda$-modules $P_{m+1}\to P_m\to\cdots\to P_0\xrightarrow{\partial}P_{-1}\to\cdots\to P_{-n}$ whose $\Lambda$-dual is also exact and satisfies $\im\partial\cong M$.
\item
There exists an exact sequence $\sigma:G_{m+1}\to G_m\to\cdots\to G_0\xrightarrow{\partial}G_{-1}\to\cdots\to G_{-n}$ of $\Lambda$-modules such that $G_i\in\G_{mn}$ for all $-n\le i\le m+1$, $\im\partial\cong M$ and $\sigma^\ast$ is also exact.
\item
There is an exact sequence $\sigma:G_{m+1}\to G_m\to\cdots\to G_0\xrightarrow{\partial}G_{-1}\to\cdots\to G_{-n}$ of $\Lambda$-modules such that $G_i\in\G_{m0}$ for all $0\le i\le m+1$, $G_i\in\G_{0n}$ for all $-n\le i\le-1$, $\im\partial\cong M$ and $\sigma^\ast$ is also exact.
\end{enumerate}
\end{thm}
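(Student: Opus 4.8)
The plan is to prove the cycle of implications $(1)\Rightarrow(2)\Rightarrow(3)\Rightarrow(4)\Rightarrow(1)$. The implications $(2)\Rightarrow(3)$ and $(3)\Rightarrow(4)$ are immediate: a finitely generated projective module belongs to $\G_{mn}$ (its $\Lambda$-dual has no positive cohomology and its transpose vanishes), while $\G_{mn}=\G_{m0}\cap\G_{0n}$ shows that the conditions imposed on the $G_i$ are only progressively weakened. So the substance lies in $(1)\Rightarrow(2)$ and $(4)\Rightarrow(1)$, and in what follows I assume $m,n\ge1$, the boundary values $0$ and $\infty$ being handled by the evident conventions.

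For $(1)\Rightarrow(2)$ I would assemble the exact sequence from two halves glued along $M$. The ``left half'' $P_{m+1}\to\cdots\to P_1\to P_0$ is a truncated projective resolution of $M$; since $\Ext_\Lambda^i(M,\Lambda)=0$ for $1\le i\le m$, its $\Lambda$-dual $0\to M^\ast\to P_0^\ast\to\cdots\to P_{m+1}^\ast$ is exact except possibly at the last term. For the ``right half'', I would use that $M^\ast$ is, up to projective summands, a second syzygy of $\tr M$ over $\Lambda^\op$, so that $\Ext_{\Lambda^\op}^j(M^\ast,\Lambda)\cong\Ext_{\Lambda^\op}^{j+2}(\tr M,\Lambda)=0$ for $1\le j\le n-2$ because $M$ is $n$-torsionfree; dualizing a truncated projective resolution $R_{n-1}\to\cdots\to R_0\to M^\ast\to0$ over $\Lambda^\op$ then produces $0\to M^{\ast\ast}\to R_0^\ast\to\cdots\to R_{n-1}^\ast$, exact except possibly at the last term, whose $\Lambda$-dual is the chosen resolution of $M^\ast$ and hence exact. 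Composing the surjection $P_0\to M$ with an embedding $M\hookrightarrow R_0^\ast$ (available since $M$ is torsionless, and inducing an isomorphism onto the kernel $M^{\ast\ast}$ of $R_0^\ast\to R_1^\ast$ since $M$ is reflexive when $n\ge2$) glues the halves into the required sequence, and a diagram chase confirms it and its $\Lambda$-dual are exact with the image of the middle map isomorphic to $M$.

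For $(4)\Rightarrow(1)$ I would split $\sigma$ at $M$ into a left half $G_{m+1}\to\cdots\to G_0\to M\to0$ and a right half $0\to M\to G_{-1}\to\cdots\to G_{-n}$. The exactness of $\sigma^\ast$ yields $\ker(G_0^\ast\to G_1^\ast)=M^\ast=\im(G_{-1}^\ast\to G_0^\ast)$, from which one reads off that the $\Lambda$-dual of each half is exact in the relevant range. Since $\G_{mn}=\G_{m0}\cap\G_{0n}$, it then suffices to prove $M\in\G_{m0}$ and $M\in\G_{0n}$ separately. For $M\in\G_{m0}$ I would induct on $m$: with $Z_0=\im(G_1\to G_0)$ and the exact sequence $0\to Z_0\to G_0\to M\to0$, the exactness of $\sigma^\ast$ forces $G_0^\ast\to Z_0^\ast$ to be surjective, hence $\Ext_\Lambda^1(M,\Lambda)=0$; and $Z_0$ inherits a shorter sequence of the same shape with terms in $\G_{(m-1)0}$, so by induction $\Ext_\Lambda^j(Z_0,\Lambda)=0$ for $1\le j\le m-1$ and dimension shifting gives $\Ext_\Lambda^k(M,\Lambda)=0$ for $2\le k\le m$. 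For $M\in\G_{0n}$ I would induct on $n$: the base case $n=1$ holds because a submodule of a torsionless module is torsionless, and in the inductive step $C=\operatorname{coker}(M\to G_{-1})$ inherits a shorter sequence with $(n-1)$-torsionfree terms, hence is $(n-1)$-torsionfree; the six-term exact sequence
$$
0\to C^\ast\to G_{-1}^\ast\to M^\ast\to\tr C\to\tr G_{-1}\to\tr M\to0
$$
attached to $0\to M\to G_{-1}\to C\to0$ (cf. \cite{AB}) collapses, by the surjectivity of $G_{-1}^\ast\to M^\ast$, to a short exact sequence $0\to\tr C\to\tr G_{-1}\to\tr M\to0$, and applying $\Hom_{\Lambda^\op}(-,\Lambda)$ while using that $G_{-1}$ is $n$-torsionfree and $C$ is $(n-1)$-torsionfree gives $\Ext_{\Lambda^\op}^j(\tr M,\Lambda)=0$ for $2\le j\le n$, the case $j=1$ holding once more because $M$ is torsionless.

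The step I expect to be the main obstacle is the $\G_{0n}$ half of $(4)\Rightarrow(1)$: the vanishing of $\Ext_\Lambda^i(M,\Lambda)$ for $1\le i\le m$ yields to routine dimension shifting, but $n$-torsionfreeness is accessible only through the transpose, and one must verify that the sequences inherited by $Z_0$ and by $C$, together with the surjectivity statements harvested from $\sigma^\ast$, have precisely the lengths and exactness ranges needed for the two inductions to close. A subsidiary point requiring care is making the degenerate values $m,n\in\{0,\infty\}$ conform to the same argument.
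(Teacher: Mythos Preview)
Your proposal is correct and follows essentially the same route as the paper's proof: the same cycle $(1)\Rightarrow(2)\Rightarrow(3)\Rightarrow(4)\Rightarrow(1)$, the same two inductions (on $m$ for $\G_{m0}$ via the kernel $Z_0$, and on $n$ for $\G_{0n}$ via the cokernel $C$ and the six-term transpose sequence of \cite{AB}), and the same use of surjectivity of the relevant dual maps harvested from $\sigma^\ast$. The only cosmetic difference is that for the right half of $(1)\Rightarrow(2)$ the paper simply invokes \cite[Theorem~(2.17)]{AB}, whereas you unpack that citation by dualizing a projective resolution of $M^\ast$ and using reflexivity of $M$; both arguments amount to the same construction.
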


\begin{proof}
(1) $\Rightarrow$ (2):
Taking a projective resolution of $M$, we get an exact sequence $\alpha:P_{m+1}\to P_m\to\cdots\to P_0\to M\to0$ with $P_i$ projective for $0\le i\le m+1$.
As $\Ext_\Lambda^i(M,\Lambda)=0$ for $1\le i\le m$, the dual sequence $0\to M^\ast\to P_0^\ast\to\cdots\to P_{m+1}^\ast$ is exact.
By \cite[Theorem (2.17)]{AB} there is an exact sequence $\beta:0\to M\to P_{-1}\to\cdots\to P_{-n}$ with $P_i$ projective for $-n\le i\le-1$ such that the dual sequence $P_{-n}^\ast\to\cdots\to P_{-1}^\ast\to M^\ast\to0$ is exact.
Splicing $\alpha$ and $\beta$, we obtain an exact sequence as in (2).

(2) $\Rightarrow$ (3):
The implication is evident since $\G_{mn}$ contains the finitely generated projective $\Lambda$-modules.

(3) $\Rightarrow$ (4):
Since $\G_{mn}$ is contained in both $\G_{m0}$ and $\G_{0n}$, the implication holds.

(4) $\Rightarrow$ (1):
As $\G_{mn}=\G_{m0}\cap\G_{0n}$, it suffices to prove the following two statements.

(a) Suppose that there is an exact sequence $\sigma:G_{m+1}\to G_m\to\cdots\to G_0\to M\to0$ such that $G_i\in\G_{m0}$ for all $0\le i\le m+1$ and $\sigma^\ast$ is also exact.
Then $M\in\G_{m0}$.

(b) Suppose that there is an exact sequence $\sigma:0\to M\to G_{-1}\to\cdots\to G_{-n}$ such that $G_i\in\G_{0n}$ for all $-n\le i\le-1$ and $\sigma^\ast$ is also exact.
Then $M\in\G_{0n}$.

First, we prove (a) by induction on $m\in\N$; then the case $m=\infty$ also follows.
There is nothing to prove when $m=0$.
Assume $m\ge1$.
Letting $N$ be the image of the map $G_1\to G_0$, we get exact sequences $G_{m+1}\to\cdots\to G_1\to N\to0$ and $0\to N\to G_0\to M\to0$.
As $m\ge1$, we have exact sequences
$$
{\rm(i)}\ G_2\to G_1\to N\to0,\qquad
{\rm(ii)}\ G_0^\ast\to G_1^\ast\to G_2^\ast,\qquad
{\rm(iii)}\ 0\to N^\ast\to G_1^\ast\to G_2^\ast,
$$
where (iii) is induced from (i).
Using the exactness of (ii) and (iii), we see that the map $G_0^\ast\to N^\ast$ is surjective, which together with the containment $G_0\in\G_{10}$ implies $\Ext_\Lambda^1(M,\Lambda)=0$.
Splicing (iii) with the exact sequence $G_1^\ast\to G_2^\ast\to\cdots\to G_{m+1}^\ast$, we get an exact sequence $0\to N^\ast\to G_1^\ast\to\cdots\to G_{m+1}^\ast$.
The induction hypothesis implies $N\in\G_{m-1,0}$.
An exact sequence $\Ext_\Lambda^{i-1}(N,\Lambda)\to\Ext_\Lambda^i(M,\Lambda)\to\Ext_\Lambda^i(G_0,\Lambda)$ is induced for each $i$.
As $G_0\in\G_{m0}$ and $N\in\G_{m-1,0}$, we observe that $\Ext_\Lambda^i(M,\Lambda)=0$ for all $2\le i\le m$.
We now obtain $M\in\G_{m0}$, as desired.

Next, we prove (b) by induction on $n\in\N$.
There is nothing to show in the case $n=0$, so let $n\ge1$.
Then $G_{-1}$ exists, and this module belongs to $\G_{01}$.
It is observed from \cite[Proposition (2.6)(a)]{AB} and \cite[Lemma 3.4]{EG} that $\G_{01}$ is closed under submodules.
The injection $M\to G_{-1}$ shows $M\in\G_{01}$.
Thus the case $n=1$ is done, and we assume $n\ge2$ from now on.
Letting $N$ be the image of the map $G_{-1}\to G_{-2}$, we have exact sequences $0\to M\to G_{-1}\to N\to0$ and $0\to N\to G_{-2}\to\cdots\to G_{-n}$.
The exact sequences $G_{-2}^\ast\to G_{-1}^\ast\to M^\ast$ and $0\to N^\ast\to G_{-1}^\ast\to M^\ast$ show that the map $G_{-2}^\ast\to N^\ast$ is surjective, and we see that the sequence $G_{-n}^\ast\to\cdots\to G_{-2}^\ast\to N^\ast\to0$ is exact.
By the induction hypothesis we obtain $N\in\G_{0,n-1}$.
An exact sequence $0\to N^\ast\to G_{-1}^\ast\to M^\ast\xrightarrow{\delta}\tr N\to\tr G_{-1}\to\tr M\to0$ (up to free summands) is induced; see \cite[Lemma (3.9)]{AB}.
The surjectivity of the map $G_{-1}^\ast\to M^\ast$ implies that the map $\delta$ is zero, which yields an exact sequence $0\to\tr N\to\tr G_{-1}\to\tr M\to0$.
This induces an exact sequence $\Ext_{\Lambda^\op}^{i-1}(\tr N,\Lambda)\to\Ext_{\Lambda^\op}^i(\tr M,\Lambda)\to\Ext_{\Lambda^\op}^i(\tr G_{-1},\Lambda)$ for each $i$.
Since $G_{-1}\in\G_{0n}$ and $N\in\G_{0,n-1}$, we observe that $\Ext_{\Lambda^\op}^i(\tr M,\Lambda)=0$ for all $2\le i\le n$.
It follows that $M\in\G_{0n}$.
\end{proof}

Letting $m=n=\infty$ in Theorem \ref{3} immediately yields the following result.
This is a special case of the theorem of Sather-Wagstaff, Sharif and White \cite[Theorem A]{SSW} when $\Lambda$ is commutative.

\begin{cor}
Let $\Lambda$ be a noetherian ring.
A finitely generated $\Lambda$-module $M$ is totally reflexive if and only if there exists an exact sequence of totally reflexive $\Lambda$-modules $\cdots\to G_1\to G_0\xrightarrow{\partial}G_{-1}\to G_{-2}\to\cdots$ whose $\Lambda$-dual is also exact and satisfies $\im\partial\cong M$.
\end{cor}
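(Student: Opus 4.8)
The plan is to obtain this corollary as the special case $m=n=\infty$ of Theorem \ref{3}. First I would recall from the Definition that $\G_{\infty\infty}$ is by construction precisely the full subcategory of $\mod\Lambda$ consisting of the totally reflexive modules: its objects are the $M$ with $\Ext_\Lambda^i(M,\Lambda)=0$ and $\Ext_{\Lambda^\op}^i(\tr M,\Lambda)=0$ for all $i\ge 1$, which is the definition of totally reflexive recalled there.

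Next I would unwind the conventions governing $m$ or $n$ equal to $\infty$ in the statement of Theorem \ref{3}. When $m=n=\infty$, the finite diagram $G_{m+1}\to G_m\to\cdots\to G_0\xrightarrow{\partial}G_{-1}\to\cdots\to G_{-n}$ becomes the doubly infinite complex $\cdots\to G_1\to G_0\xrightarrow{\partial}G_{-1}\to G_{-2}\to\cdots$; the condition $G_i\in\G_{mn}$ for $-n\le i\le m+1$ becomes the requirement that every $G_i$ be totally reflexive; and exactness of $\sigma^\ast$ becomes exactness of the $\Lambda$-dual of this bi-infinite complex. With these identifications, the equivalence (1) $\Leftrightarrow$ (3) of Theorem \ref{3} says exactly that $M$ is totally reflexive if and only if it is the image of the map $\partial$ in some bi-infinite exact sequence of totally reflexive modules whose $\Lambda$-dual is again exact. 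This is the assertion of the corollary, so nothing further is needed.

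For completeness I would note that the two directions can also be seen separately through the other conditions of Theorem \ref{3}: the ``only if'' part follows from (1) $\Rightarrow$ (2), since a complete projective resolution is in particular a bi-infinite exact sequence of (totally reflexive, being projective) modules with exact $\Lambda$-dual, while the ``if'' part is (3) $\Rightarrow$ (1) applied with $m=n=\infty$. There is essentially no obstacle here beyond bookkeeping; the only point demanding a moment's attention is checking that the $\infty$-conventions of Theorem \ref{3} turn the displayed finite diagram into the correct two-sided infinite sequence and collapse the subcategory hypotheses to ``totally reflexive'', after which the corollary is an immediate specialization.
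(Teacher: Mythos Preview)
Your proposal is correct and follows exactly the paper's approach: the paper simply states that letting $m=n=\infty$ in Theorem \ref{3} immediately yields the corollary, and your unwinding of the $\infty$-conventions and identification of $\G_{\infty\infty}$ with the totally reflexive modules is precisely the bookkeeping this specialization entails.
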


Here we recall the definition of Gorenstein dimension, which has been introduced in \cite{AB}.

\begin{dfn}
Let $\Lambda$ be a noetherian ring and $M$ a finitely generated $\Lambda$-module.
The {\em Gorenstein dimension} ({\em G-dimension} for short) of $M$, which is denoted by $\gdim_\Lambda M$, is defined as the infimum of integers $n\ge0$ such that there exists an exact sequence $0\to G_n\to G_{n-1}\to\cdots\to G_1\to G_0\to M\to0$ of $\Lambda$-modules with $G_i$ being totally reflexive for all $0\le i\le n$.
\end{dfn}

The {\em reducing homological dimensions} are defined in \cite[Definition 2.1]{AC}.
In this paper we deal with two of them.

\begin{dfn}\label{18}
Let $\Lambda$ be a noetherian ring.
Let $M$ be a finitely generated $\Lambda$-module.
We define the {\em reducing projective dimension} $\red_\Lambda^{\pd}M$ and the {\em reducing Gorenstein dimension} $\red_\Lambda^{\gdim}M$ of $M$ as follows; let $\varpi\in\{\pd,\gdim\}$.
\begin{enumerate}[(1)]
\item
One has $\varpi_\Lambda(M)<\infty$ if and only if $\red_\Lambda^\varpi M=0$.
\item
Assume $\varpi_\Lambda(M)=\infty$.
If there exist exact sequences $\{0\to M_{i-1}^{\oplus a_i}\to M_i\to\syz^{n_i}M_{i-1}^{\oplus b_i}\to0\}_{i=1}^s$ in $\mod\Lambda$ with $n_i\ge0$, $s,a_i,b_i>0$, $M_0=M$ and $\varpi_\Lambda(M_s)<\infty$, then $\red_\Lambda^\varpi M$ is the minimum of such integers $s$.
\item
If no such exact sequences as in (2) exist, then $\red_\Lambda^\varpi M=\infty$.
\end{enumerate}
\end{dfn}

\begin{rem}\label{19}
In the original definition of reducing homological dimension given in \cite{AC}, the condition $n_i\ge0$ appearing in Definition \ref{18}(2) is replaced with $n_i>0$.
According to the definition of reducible complexity (see Definition \ref{25}(2) stated later), it is more natural to require $n_i\ge0$ rather than $n_i>0$ (this corresponds to the condition $n\ge0$ in Definition \ref{25}(2)(b)), and so we adopt this modification.
Note that reducing homological dimension in our sense is always smaller than or equal to reducing homological dimension in the sense of \cite{AC}.
\end{rem}

Theorems of Auslander, Buchsbaum and Serre and of Auslander and Bridger are stated in terms of reducing projective and Gorenstein dimensions as follows.

\begin{rem}\label{8}
Let $(R,\m,k)$ be a commutative noetherian local ring.
One has the following equivalences.
\begin{enumerate}[(1)]
\item
$\text{$R$ is regular}\iff
\text{$\red_R^{\pd}M=0$ for all $M\in\mod R$}\iff
\text{$\red_R^{\pd}k=0$.}$
\item
$\text{$R$ is Gorenstein}\iff
\text{$\red_R^{\gdim}M=0$ for all $M\in\mod R$}\iff
\text{$\red_R^{\gdim}k=0$.}$
\end{enumerate}
\end{rem}

We establish a lemma which is a consequence of the definitions of reducing homological dimensions.

\begin{lem}\label{redseq}
Let $\Lambda$ be a noetherian ring, and $M$ a finitely generated $\Lambda$-module.
Let $\varpi\in\{\pd,\gdim\}$, and let $r$ be a positive integer.
If one has $\red_\Lambda^\varpi M\leq r$, then there exist exact sequences $\{0\to M_{i-1}^{\oplus a_i}\to M_i\to\syz^{n_i}M_{i-1}^{\oplus b_i}\to0\}_{i=1}^r$ in $\mod\Lambda$ such that $n_i\ge0$, $a_i,b_i>0$, $M_0=M$ and $\varpi_\Lambda(M_r)<\infty$.
\end{lem}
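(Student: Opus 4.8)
The plan is to unwind Definition \ref{18} and then pad the given data with split short exact sequences. Put $s=\red_\Lambda^\varpi M$, so that $0\le s\le r$ by hypothesis. Suppose first that $s>0$. Then Definition \ref{18}(2) provides exact sequences $\{0\to M_{i-1}^{\oplus a_i}\to M_i\to\syz^{n_i}M_{i-1}^{\oplus b_i}\to0\}_{i=1}^s$ with $n_i\ge0$, $a_i,b_i>0$, $M_0=M$ and $\varpi_\Lambda(M_s)<\infty$. If $s=r$ there is nothing more to do. If $s<r$, then for each $i$ with $s<i\le r$ set $M_i=M_{i-1}^{\oplus2}$ and take the split exact sequence $0\to M_{i-1}\to M_{i-1}^{\oplus2}\to M_{i-1}\to0$; with the convention $\syz^0N=N$ this has the form required in Definition \ref{18}(2), with $a_i=b_i=1$ and $n_i=0$. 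Since $M_r\cong M_s^{\oplus 2^{r-s}}$, and for $\varpi\in\{\pd,\gdim\}$ the finiteness of $\varpi_\Lambda(-)$ is unaffected by replacing a module with a finite direct sum of copies of it, we get $\varpi_\Lambda(M_r)<\infty$. This produces the desired family of $r$ sequences.

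If instead $s=0$, then $\varpi_\Lambda(M)<\infty$ by Definition \ref{18}(1), and there is no initial sequence to extend, so we simply run the padding from scratch: set $M_0=M$ and, for $1\le i\le r$, let $M_i=M_{i-1}^{\oplus2}$ together with the split exact sequence $0\to M_{i-1}\to M_{i-1}^{\oplus2}\to M_{i-1}\to0$ (again $a_i=b_i=1$, $n_i=0$). Then $M_r\cong M^{\oplus2^r}$, whence $\varpi_\Lambda(M_r)<\infty$, as required.

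The argument is essentially bookkeeping, so I do not expect a genuine obstacle. The two points that deserve a word of justification are: that a split sequence $0\to N\to N^{\oplus2}\to N\to0$ counts as an admissible reducing step precisely because the modified Definition \ref{18} (see Remark \ref{19}) permits $n_i=0$ rather than insisting on $n_i>0$; and that for $\varpi\in\{\pd,\gdim\}$ the finiteness of $\varpi_\Lambda$ passes between a module $N$ and a finite direct sum $N^{\oplus t}$ of copies of it (for $\varpi=\pd$ because a projective resolution of $N^{\oplus t}$ is the $t$-fold direct sum of one of $N$; for $\varpi=\gdim$ because, by Auslander and Bridger, the G-dimension of a finite direct sum equals the supremum of the G-dimensions of the summands).
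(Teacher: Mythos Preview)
Your proof is correct and follows essentially the same approach as the paper: both set $s=\red_\Lambda^\varpi M$, take the $s$ sequences supplied by the definition (or none when $s=0$), and pad up to $r$ with split sequences using $a_i=b_i=1$ and $n_i=0$. Your extra remarks on the convention $\syz^0N=N$ and on why finiteness of $\varpi_\Lambda$ is stable under finite direct sums make explicit what the paper leaves tacit, but the argument is the same.
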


\begin{proof}
Let $s=\red_\Lambda^\varpi M\le r$.
When $s=0$, we have $\varpi_\Lambda(M)<\infty$, and then we set $M_0=M$.
When $s>0$, there exist exact sequences $\{0\to M_{i-1}^{\oplus a_i}\to M_i\to\syz^{n_i}M_{i-1}^{\oplus b_i}\to0\}_{i=1}^s$ in $\mod\Lambda$ with $n_i\ge0$, $a_i,b_i>0$, $M_0=M$ and $\varpi_\Lambda(M_s)<\infty$.
For each $s+1\le i\le r$, let $a_i,b_i$ be any positive integers (say $a_i=b_i=1$), let $n_i$ be any nonnegative integer (say $n_i=0$), set $M_i=M_{i-1}^{\oplus a_i}\oplus\syz^{n_i}M_{i-1}^{\oplus b_i}$, and take the split exact sequence $0\to M_{i-1}^{\oplus a_i}\to M_i\to\syz^{n_i}M_{i-1}^{\oplus b_i}\to0$.
We obtain exact sequences $\{0\to M_{i-1}^{\oplus a_i}\to M_i\to\syz^{n_i}M_{i-1}^{\oplus b_i}\to0\}_{i=1}^r$ in $\mod\Lambda$ with $n_i\ge0$, $a_i,b_i>0$, $M_0=M$ and $\varpi_\Lambda(M_r)<\infty$.
\end{proof}

The result below includes a theorem of Araya and Celikbas \cite[Theorem 1.3]{AC}; they prove it under the additional assumption that $\Lambda$ is both commutative and local\footnote{By Remark \ref{19}, finiteness of reducing Gorenstein dimension in the sense of \cite{AC} implies finiteness of reducing Gorenstein dimension in our sense. Thus, even when $\Lambda$ is both commutative and local, Corollary \ref{20} is stronger than \cite[Theorem 1.3]{AC}.}.
It is also worth mentioning that our proof is much simpler than that of \cite[Theorem 1.3]{AC}.

\begin{cor}\label{20}
Let $\Lambda$ be a noetherian ring.
Let $M$ be a finitely generated $\Lambda$-module of finite reducing Gorenstein dimension.
Then $\gdim_\Lambda M=\sup\{i\in\N\mid\Ext_\Lambda^i(M,\Lambda)\ne0\}$.
\end{cor}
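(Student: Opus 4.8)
The plan is to set $w := \sup\{i\in\N\mid\Ext_\Lambda^i(M,\Lambda)\ne0\}$ and prove the two inequalities $\gdim_\Lambda M\le w$ and $w\le\gdim_\Lambda M$ separately. The second inequality is the easy one: it is a completely general fact (valid without any reducing hypothesis) that if $\gdim_\Lambda M=d<\infty$ then $\Ext_\Lambda^i(M,\Lambda)=0$ for all $i>d$, since a totally reflexive resolution of length $d$ computes $\Ext$ and totally reflexive modules have vanishing higher $\Lambda$-dual cohomology. If $\gdim_\Lambda M=\infty$ there is nothing to prove for that direction, so the real content is the first inequality, and in particular one must show that finite reducing Gorenstein dimension forces $\gdim_\Lambda M<\infty$.

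For the inequality $\gdim_\Lambda M\le w$ I would argue by induction on $r:=\red_\Lambda^{\gdim}M$, using Lemma \ref{redseq} to fix a family of exact sequences $\{0\to M_{i-1}^{\oplus a_i}\to M_i\to\syz^{n_i}M_{i-1}^{\oplus b_i}\to0\}_{i=1}^r$ with $\gdim_\Lambda M_r<\infty$. The base case $r=0$ says $\gdim_\Lambda M<\infty$, whence $\gdim_\Lambda M=\sup\{i\mid\Ext_\Lambda^i(M,\Lambda)\ne0\}$ by the Auslander–Bridger formula characterization of G-dimension in terms of $\Ext$ vanishing (this is exactly the direction $\gdim\le w$ combined with $w\le\gdim$ in the finite case, which is classical; alternatively one notes that for $M$ with $\gdim_\Lambda M<\infty$, $\gdim_\Lambda M$ equals the largest $i$ with $\Ext_\Lambda^i(M,\Lambda)\ne0$). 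For the inductive step, set $N:=M_1$, which has $\red_\Lambda^{\gdim}N\le r-1$, so by induction $\gdim_\Lambda N=\sup\{i\mid\Ext_\Lambda^i(N,\Lambda)\ne0\}<\infty$; in particular $N$ has finite G-dimension. Now I exploit the exact sequence $0\to M^{\oplus a_1}\to N\to\syz^{n_1}M^{\oplus b_1}\to0$ together with the standard fact that $\gdim_\Lambda(\syz^{n_1}M^{\oplus b_1})$ differs from $\gdim_\Lambda M$ by at most $n_1$ in a controlled way (syzygies lower G-dimension by one when it is positive), so finiteness of $\gdim_\Lambda N$ propagates to finiteness of $\gdim_\Lambda M$ via the change-of-rings/horseshoe behavior of G-dimension along short exact sequences: if two of the three terms of a short exact sequence have finite G-dimension, so does the third. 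Hence $\gdim_\Lambda M<\infty$, and then the equality $\gdim_\Lambda M=w$ again follows from the classical finite case.

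The key reduction making this clean is that the $\Ext$-values themselves behave well along the defining sequences: from $0\to M^{\oplus a_1}\to N\to\syz^{n_1}M^{\oplus b_1}\to0$ one gets long exact sequences in $\Ext_\Lambda^\bullet(-,\Lambda)$ relating $\Ext_\Lambda^i(M,\Lambda)$, $\Ext_\Lambda^i(N,\Lambda)$ and $\Ext_\Lambda^{i+n_1}(M,\Lambda)$ (using $\Ext_\Lambda^i(\syz^{n_1}M,\Lambda)\cong\Ext_\Lambda^{i+n_1}(M,\Lambda)$ for $i\ge1$), so that high vanishing of $\Ext$ against $N$ forces eventual vanishing of $\Ext$ against $M$; once $\Ext_\Lambda^i(M,\Lambda)=0$ for $i\gg0$, a module of finite "finite G-dimension witness" can be constructed, i.e.\ $\gdim_\Lambda M<\infty$, and then $\gdim_\Lambda M=w$.

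The main obstacle I anticipate is bookkeeping the interaction between the direct-sum multiplicities $a_i,b_i$, the syzygy shifts $n_i$, and G-dimension along the short exact sequences — specifically, ensuring that "finite G-dimension of $M_r$" really pulls back through all $r$ sequences to "finite G-dimension of $M$", since G-dimension is not additive on short exact sequences in general (only the two-out-of-three finiteness principle holds, plus the inequality $\gdim C\le\max\{\gdim A,\gdim B+1\}$ type bounds). One must be careful that $\syz^{n_i}$ with $n_i=0$ is allowed (so the sequence may be trivial) and that direct sums do not change G-dimension. Handling the case $\gdim_\Lambda M_r$ possibly zero versus positive, and the edge case where some $M_i$ is free, requires a little care but no new idea; the conceptual weight is entirely in the two-out-of-three finiteness of G-dimension along short exact sequences and in the classical identity $\gdim=\sup\{i\mid\Ext^i(-,\Lambda)\ne0\}$ for modules of finite G-dimension.
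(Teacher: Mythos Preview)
Your proposal has a genuine gap at the decisive step. After establishing by induction that $\gdim_\Lambda N<\infty$, you invoke the two-out-of-three principle for finite Gorenstein dimension on the sequence $0\to M^{\oplus a}\to N\to\syz^{n}M^{\oplus b}\to0$. But that principle requires \emph{two} of the three terms to have finite G-dimension, and here you know this only for the middle term $N$: both outer terms are built from $M$, whose G-dimension is precisely what you are trying to bound. The argument is circular at this point, and this is not a bookkeeping issue with the multiplicities $a_i,b_i$ or the shifts $n_i$; it is a missing idea. Your fallback---that eventual vanishing of $\Ext_\Lambda^i(M,\Lambda)$ should already give $\gdim_\Lambda M<\infty$---is false in general: a module with $\Ext_\Lambda^{>0}(M,\Lambda)=0$ need not be totally reflexive, since one must also control $\Ext_{\Lambda^\op}^{>0}(\tr M,\Lambda)$, and there exist modules in $\G_{\infty0}$ that do not lie in $\G_{\infty\infty}$.

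The paper confronts exactly this obstruction. After passing to $s$th syzygies so that $M\in\G_{\infty0}$ and $N\in\G_{\infty\infty}$, it \emph{iterates} the short exact sequence: from $0\to M^{\oplus a}\to N\to\syz^nM^{\oplus b}\to0$ one splices together, for every $i\ge1$, an exact sequence
\[
0\to M^{\oplus a^i}\to G_{i-1}\to\cdots\to G_0\to\syz^{in}M^{\oplus b^i}\to0
\]
with each $G_j$ totally reflexive and with exact $\Lambda$-dual. Theorem~\ref{3} then yields $M^{\oplus a^i}\in\G_{0i}$ for all $i$, hence $M\in\G_{0\infty}$, and combined with $M\in\G_{\infty0}$ this gives total reflexivity of $M$. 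The conceptual weight is carried by Theorem~\ref{3}, which is why the paper develops the $(m,n)$-torsionfree machinery first; your outline bypasses this entirely.
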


\begin{proof}
The equality holds if $\gdim_\Lambda M<\infty$ by \cite[Page 95]{AB}.
It suffices to show that $\gdim_\Lambda M<\infty$ if $s:=\sup\{i\in\N\mid\Ext_\Lambda^i(M,\Lambda)\ne0\}$ is finite.
Put $r=\red_\Lambda^{\gdim}M<\infty$.
If $r=0$, then $\gdim_\Lambda M<\infty$ and we are done.
We assume $r>0$.
By Lemma \ref{redseq} there exist exact sequences $\{0\to M_{i-1}^{\oplus a_i}\to M_i\to\syz^{n_i}M_{i-1}^{\oplus b_i}\to0\}_{i=1}^r$ in $\mod\Lambda$ with $n_i\ge0$, $a_i,b_i>0$, $M_0=M$ and $\gdim_\Lambda M_r<\infty$.
Setting $a=a_1$, $b=b_1$, $n=n_1$ and $N=M_1$, we have an exact sequence
$$
0\to M^{\oplus a}\to N\to\syz^nM^{\oplus b}\to0
$$
and $\red_\Lambda^{\gdim}N\le r-1$.
This exact sequence shows that $\sup\{i\in\N\mid\Ext_\Lambda^i(N,\Lambda)\ne0\}\le s<\infty$.
The induction hypothesis implies that $N$ has finite Gorenstein dimension.
To show that $M$ has finite Gorenstein dimension, replacing $M$ and $N$ with their $s$th syzygies, we may assume that $M\in\G_{\infty0}$ and $N\in\G_{\infty\infty}$.
There are exact sequences $0\to M^{\oplus ab}\to N^{\oplus b}\to\syz^nM^{\oplus b^2}\to0$ and $0\to M^{\oplus a^2}\to N^{\oplus a}\to\syz^nM^{\oplus ab}\to0$, the former of which induces an exact sequence $0\to\syz^nM^{\oplus ab}\to\syz^nN^{\oplus b}\oplus P_1\to\syz^{2n}M^{\oplus b^2}\to0$ with $P_1$ projective.
We get an exact sequence
$$
0\to M^{\oplus a^2}\to N^{\oplus a}\to\syz^nN^{\oplus b}\oplus P_1\to\syz^{2n}M^{\oplus b^2}\to0.
$$
There are exact sequences $0\to M^{\oplus ab^2}\to N^{\oplus b^2}\to\syz^nM^{\oplus b^3}\to0$ and $0\to M^{\oplus a^3}\to N^{\oplus a^2}\to\syz^nN^{\oplus ab}\oplus P_1^{\oplus a}\to\syz^{2n}M^{\oplus ab^2}\to0$, the former of which induces an exact sequence $0\to\syz^{2n}M^{\oplus ab^2}\to\syz^{2n}N^{\oplus b^2}\oplus P_2\to\syz^{3n}M^{\oplus b^3}\to0$ with $P_2$ projective.
We get an exact sequence
$$
0\to M^{\oplus a^3}\to N^{\oplus a^2}\to\syz^nN^{\oplus ab}\oplus P_1^{\oplus a}\to\syz^{2n}N^{\oplus b^2}\oplus P_2\to\syz^{3n}M^{\oplus b^3}\to0.
$$
Iterating this procedure, we obtain exact sequences
$$
\{0\to M^{\oplus a^i}\to G_{i-1}\to\cdots\to G_1\to G_0\to\syz^{in}M^{\oplus b^i}\to0\}_{i\ge1}
$$
with $G_j\in\G_{\infty\infty}$ for all $0\le j\le i-1$.
Using the fact that $M$ is in $\G_{\infty0}$, we see that the $\Lambda$-duals of these exact sequences are again exact.
Now our Theorem \ref{3} deduces that $M^{\oplus a^i}$ belongs to $\G_{0i}$ for each $i\ge1$.
Since $a>0$, this implies that $M\in\G_{0i}$ for all $i\ge1$, whence $M\in\G_{0\infty}$.
It follows that $M\in\G_{\infty0}\cap\G_{0\infty}=\G_{\infty\infty}$.
Therefore $M$ is totally reflexive, and it has finite Gorenstein dimension.
\end{proof}

\section{Reducing projective dimension and upper reducing projective dimension}\label{11}

In this section, we explore reducing projective dimension and upper reducing projective dimension mainly over a commutative noetherian local ring.
We obtain a close connection of upper reducing projective dimension with complexity.
We also investigate some properties of local rings by means of finiteness of reducing projective dimension and of upper reducing projective dimension.

Modifying the definitions of reducing projective and Gorenstein dimensions, we introduce the following homological invariants.

\begin{dfn}
Let $\Lambda$ be a noetherian ring.
Let $M$ be a finitely generated $\Lambda$-module.
We define the {\em upper reducing projective dimension} $\ured_\Lambda^{\pd}M$ and the {\em upper reducing Gorenstein dimension} $\ured_\Lambda^{\gdim}M$ of $M$ as follows; let $\varpi\in\{\pd,\gdim\}$.
\begin{enumerate}[(1)]
\item
One has $\varpi_\Lambda(M)<\infty$ if and only if $\ured_\Lambda^{\varpi}M=0$.
\item
Assume $\varpi_\Lambda(M)=\infty$.
If there exist exact sequences $\{0\to M_{i-1}\to M_i\to\syz^{n_i}M_{i-1}\to0\}_{i=1}^s$ in $\mod\Lambda$ with $n_i\ge0$, $s>0$, $M_0=M$ and $\varpi_\Lambda(M_s)<\infty$, then $\ured_\Lambda^{\varpi}M$ is the minimum of such $s$.
\item
If no such exact sequences exist, then $\ured_\Lambda^{\varpi}M=\infty$.
\end{enumerate}
Note that there is an inequality $\ured_\Lambda^{\varpi}M\ge\red_\Lambda^{\varpi}M$, which is the reason for the names of {\em upper} reducing projective/Gorenstein dimensions.
\end{dfn}

Similarly to Remark \ref{8}, the statements below hold.

\begin{rem}\label{10}
Let $(R,\m,k)$ be a commutative noetherian local ring.
One has the following equivalences.
\begin{enumerate}[(1)]
\item
$\text{$R$ is regular}\iff
\text{$\ured_R^{\pd}M=0$ for all $M\in\mod R$}\iff
\text{$\ured_R^{\pd}k=0$.}$
\item
$\text{$R$ is Gorenstein}\iff
\text{$\ured_R^{\gdim}M=0$ for all $M\in\mod R$}\iff
\text{$\ured_R^{\gdim}k=0$.}$
\end{enumerate}
\end{rem}

The following lemma is the upper reducing homological dimension version of Lemma \ref{redseq}, whose proof is similar and so omitted.

\begin{lem}\label{redseq2}
Let $\Lambda$ be a noetherian ring, and $M$ a finitely generated $\Lambda$-module.
Let $\varpi\in\{\pd,\gdim\}$, and let $r$ be a positive integer.
If one has $\ured_\Lambda^\varpi M\leq r$, then there exist exact sequences $\{0\to M_{i-1}\to M_i\to\syz^{n_i}M_{i-1}\to0\}_{i=1}^r$ in $\mod\Lambda$ such that $n_i\ge0$, $M_0=M$ and $\varpi_\Lambda(M_r)<\infty$.
\end{lem}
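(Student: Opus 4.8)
The plan is to mimic the proof of Lemma \ref{redseq} essentially verbatim, the only change being that all the direct-sum multiplicities appearing there are taken to be $1$. First I would set $s=\ured_\Lambda^\varpi M$, so that $s\le r$ by hypothesis. If $s=0$, then $\varpi_\Lambda(M)<\infty$ and I start the chain with $M_0=M$. If $s>0$, then the definition of $\ured_\Lambda^\varpi M$ provides exact sequences $\{0\to M_{i-1}\to M_i\to\syz^{n_i}M_{i-1}\to0\}_{i=1}^s$ in $\mod\Lambda$ with $n_i\ge0$, $M_0=M$ and $\varpi_\Lambda(M_s)<\infty$. In either case I have produced the required exact sequences for $1\le i\le s$, terminating in a module of finite $\varpi$-dimension.

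Next I would extend the family from length $s$ up to length $r$. For each index $i$ with $s<i\le r$, take $n_i=0$ (legitimate since Definition of $\ured$ permits $n_i\ge 0$; note $\syz^{0}M_{i-1}=M_{i-1}$), set $M_i=M_{i-1}^{\oplus2}$, and use the split exact sequence $0\to M_{i-1}\to M_i\to\syz^{n_i}M_{i-1}\to0$. Since a finite direct sum of a module with copies of itself has the same projective (resp. Gorenstein) dimension as the module, an easy induction on $i$ from $s$ to $r$ shows that $\varpi_\Lambda(M_r)<\infty$. Splicing these new sequences onto the earlier ones yields the desired family $\{0\to M_{i-1}\to M_i\to\syz^{n_i}M_{i-1}\to0\}_{i=1}^r$ with $n_i\ge0$, $M_0=M$ and $\varpi_\Lambda(M_r)<\infty$.

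I do not anticipate any genuine obstacle here: the only point that needs checking is that finiteness of $\varpi_\Lambda(M_r)$ survives the split padding, which follows from the elementary facts that $\varpi_\Lambda(A\oplus B)=\max\{\varpi_\Lambda A,\varpi_\Lambda B\}$ for $\varpi\in\{\pd,\gdim\}$ (and, if one preferred to allow nonzero $n_i$ in the padding step, that $\varpi_\Lambda\syz^n N\le\varpi_\Lambda N$). This is exactly the sense in which this argument is "similar'' to that of Lemma \ref{redseq}.
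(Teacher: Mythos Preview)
Your argument is correct and is exactly the ``similar'' proof the paper has in mind: it specializes the proof of Lemma~\ref{redseq} by taking all multiplicities $a_i=b_i=1$ and padding with split sequences $0\to M_{i-1}\to M_{i-1}^{\oplus2}\to\syz^{0}M_{i-1}\to0$ for $s<i\le r$. Nothing further is needed.
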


Here we recall the definition of complexity, and that of reducible complexity introduced by Bergh \cite{B}.

\begin{dfn}\label{25}
Let $(R,\m,k)$ be a commutative noetherian local ring.
\begin{enumerate}[(1)]
\item
Let $M$ be a finitely generated $R$-module.
The {\em complexity} $\cx_RM$ of $M$ is defined to be the infimum of integers $n\ge0$ such that there exists a real number $\alpha$ with $\beta_i^R(M)\le\alpha i^{n-1}$ for all $i\gg0$.
Here $\beta_i^R(M)$ stands for the $i$th Betti number of the $R$-module $M$, i.e., $\beta_i^R(M)=\dim_k\Tor_i^R(M,k)$.
\item
We define the full subcategory $\X$ of $\mod R$ inductively as follows.
Let $M\in\mod R$.
\begin{enumerate}[(a)]
\item
If $\pd_RM<\infty$ (i.e., $\cx_RM=0$), then $M\in\X$.
\item
If $0<\cx_RM<\infty$ and there exists an exact sequence $0\to M\to N\to\syz^nM\to0$ in $\mod R$ with $n\ge0$ such that $\cx_RN<\cx_RM$, $\depth_RN=\depth_RM$ and $N\in\X$, then $M\in\X$.
\end{enumerate}
The modules belonging to $\X$ are said to have {\em reducible complexity}.
Note by definition that any module having reducible complexity has finite complexity.
\end{enumerate}
\end{dfn}

\begin{rem}\label{5}
Let $R$ be a commutative noetherian local ring.
By \cite[Proposition 2.2(i)]{B}, if a finitely generated $R$-module has finite complete intersection dimension, then it has reducible complexity.
In particular, if $R$ is a complete intersection, then every finitely generated $R$-module has reducible complexity.
\end{rem}

A reason why we introduce upper reducing projective dimension is that the following theorem holds.

\begin{thm}\label{4}
Let $R$ be a commutative noetherian local ring.
Let $M$ be a finitely generated $R$-module.
Then $\cx_RM\le\ured_R^{\pd}M$.
If $M$ has reducible complexity, then the equality $\cx_RM=\ured_R^{\pd}M$ holds.
\end{thm}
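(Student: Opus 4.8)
The plan is to prove the two inequalities $\cx_R M \le \ured_R^{\pd} M$ and (under the reducible-complexity hypothesis) $\ured_R^{\pd} M \le \cx_R M$ separately; together they give the claimed equality. For the inequality, one may assume $r := \ured_R^{\pd} M$ is finite, so that Lemma \ref{redseq2} furnishes exact sequences $\{0 \to M_{i-1} \to M_i \to \syz^{n_i} M_{i-1} \to 0\}_{i=1}^{r}$ in $\mod R$ with $n_i \ge 0$, $M_0 = M$ and $\pd_R M_r < \infty$, that is, $\cx_R M_r = 0$. The crux is to show $\cx_R M_{i-1} \le \cx_R M_i + 1$ for each $i$. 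Granting this, a downward induction shows that every $M_i$ has finite complexity and $\cx_R M \le \cx_R M_r + r = r$.

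To prove $\cx_R M_{i-1} \le \cx_R M_i + 1$, apply $\Tor^R_\bullet(-,k)$ to the $i$th short exact sequence. Using the identification $\beta_j^R(\syz^{n_i} M_{i-1}) = \beta_{j+n_i}^R(M_{i-1})$ for $j \ge 1$, the connecting maps yield, with $d := n_i + 1 \ge 1$ and for all $j \gg 0$, an estimate of the form
\[
\beta_{j+d}^R(M_{i-1}) \le \beta_j^R(M_{i-1}) + \beta_{j+1}^R(M_i) .
\]
If $\cx_R M_i = c < \infty$ then $\beta_l^R(M_i)$ is at most a constant times $l^{c-1}$ for $l \gg 0$; iterating the displayed recurrence along each arithmetic progression of common difference $d$ then bounds $\beta_m^R(M_{i-1})$ by a constant multiple of $m \cdot m^{c-1} = m^{c}$ for $m \gg 0$, so $\cx_R M_{i-1} \le c+1$. (When $\pd_R M_i < \infty$ the recurrence degenerates to $\beta_{j+d}^R(M_{i-1}) \le \beta_j^R(M_{i-1})$ for $j \gg 0$, forcing the Betti numbers of $M_{i-1}$ to be bounded and hence $\cx_R M_{i-1} \le 1$, consistent with $c+1 = 1$.)

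For the reverse inequality, assume $M$ has reducible complexity, so $M \in \X$ and $c := \cx_R M < \infty$; we show $\ured_R^{\pd} M \le c$ by induction on $c$. If $c = 0$ then $\pd_R M < \infty$ and $\ured_R^{\pd} M = 0$. If $c \ge 1$, the definition of $\X$ gives an exact sequence $0 \to M \to N \to \syz^n M \to 0$ with $n \ge 0$, $\cx_R N < c$ and $N \in \X$, and the induction hypothesis yields $\ured_R^{\pd} N \le \cx_R N \le c-1$. If $\pd_R N < \infty$, this single sequence already shows $\ured_R^{\pd} M \le 1 \le c$. Otherwise $1 \le \ured_R^{\pd} N \le c-1$, so $c \ge 2$ and Lemma \ref{redseq2} produces exact sequences $\{0 \to N_{i-1} \to N_i \to \syz^{m_i} N_{i-1} \to 0\}_{i=1}^{c-1}$ with $N_0 = N$ and $\pd_R N_{c-1} < \infty$; splicing $0 \to M \to N \to \syz^n M \to 0$ on top of this chain (put $M_0 = M$ and $M_i = N_{i-1}$ for $1 \le i \le c$) exhibits $c$ reducing exact sequences from $M$ to a module of finite projective dimension, whence $\ured_R^{\pd} M \le c$. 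Combined with $\cx_R M \le \ured_R^{\pd} M$, this gives $\cx_R M = \ured_R^{\pd} M$.

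The main obstacle is the Betti-number recurrence step: the short exact sequence only relates the Betti numbers of $M_{i-1}$ in degrees differing by $d$, so one must upgrade this to a genuine polynomial upper bound, and it is precisely the polynomial growth of $\beta^R_\bullet(M_i)$ that makes the telescoping sum along a progression of step $d = n_i + 1$ controllable. The remaining points — the free-summand conventions for syzygies (which affect only finitely many low-degree Betti numbers) and the degenerate finite-projective-dimension cases — are routine.
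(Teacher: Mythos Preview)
Your proof is correct and follows essentially the same approach as the paper's. Both halves proceed by the same inductions, and the key Betti-number recurrence you derive, $\beta_{j+d}^R(M_{i-1}) \le \beta_j^R(M_{i-1}) + \beta_{j+1}^R(M_i)$, together with the telescoping along arithmetic progressions of step $d=n_i+1$, is exactly the computation the paper carries out explicitly; your packaging of the inductive step as ``$\cx_R M_{i-1}\le\cx_R M_i+1$'' and your use of $c-1$ rather than $e=\cx_R N$ sequences in the reverse inequality are only cosmetic differences.
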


\begin{proof}
First, we prove the inequality $\cx_RM\le\ured_R^{\pd}M$.
We may assume $\ured_R^{\pd}M=r<\infty$.
We use induction on $r$.
If $r=0$, then $\pd_RM<\infty$ and hence $\cx_RM=0$.
Let $r\ge1$.
There exist exact sequences $\{0\to M_{i-1}\to M_i\to\syz^{n_i}M_{i-1}\to0\}_{i=1}^r$ in $\mod R$ such that $M_0=M$ and $\pd_RM_r<\infty$.
Note then that $\ured_R^{\pd}M_1\le r-1$.
The induction hypothesis implies $\cx_RM_1\le r-1$.
Putting $n=n_1$, $N=M_1$ and $c=\cx_RN$, we have an exact sequence $0\to M\to N\to\syz^nM\to0$ and $c\le r-1$.
There exist a real number $\alpha>0$ and an integer $u>0$ such that $\beta_i^R(N)\le\alpha i^{c-1}$ for all $i\ge u$.
The induced exact sequence $\Tor_i^R(N,k)\to\Tor_i^R(\syz^nM,k)\to\Tor_{i-1}^R(M,k)$ for each $i$ yields $\beta_{i+n}^R(M)\le\beta_i^R(N)+\beta_{i-1}^R(M)\le\alpha i^{c-1}+\beta_{i-1}^R(M)$ for all $i\ge u$.
Fix two integers $p,q$ with $p\ge0$ and $u-1\le q\le u-1+n$.
We have
\begin{align}\label{26}
\beta_{p(n+1)+q}^R(M)-\beta_q^R(M)
&=\textstyle\sum_{l=1}^p(\beta_{l(n+1)+q}^R(M)-\beta_{(l-1)(n+1)+q}^R(M))\\
&\notag\le\textstyle\sum_{l=1}^p\alpha((l-1)(n+1)+q+1)^{c-1}\\
&\notag\le\textstyle\sum_{l=1}^p\alpha((p-1)(n+1)+q+1)^{c-1}\\
&\notag=\alpha p((p-1)(n+1)+q+1)^{c-1}
\le\alpha(p(n+1)+q)^c.
\end{align}
Putting $\gamma=\max_{u-1\le h\le u-1+n}\{\beta_h^R(M)\}$, we get $\beta_{p(n+1)+q}^R(M)\le\gamma+\alpha(p(n+1)+q)^c$ for all $p,q$ with $p\ge 0$ and $u-1\le q\le u-1+n$.
Note that $\{p(n+1)+q\mid p\ge0,\,u-1\le q\le u-1+n\}$ coincides with the set of integers at least $u-1$.
Hence $\beta_i^R(M)\le\gamma+\alpha i^c$ for all $i\ge u-1$, which implies $\beta_i^R(M)\le(\alpha+1)i^c$ for all $i\gg0$.
Therefore $\cx_RM\le c+1\le r$ as desired.

Next, we prove the opposite inequality $\cx_RM\ge\ured_R^{\pd}M$, assuming that $M$ has reducible complexity.
Let $c:=\cx_RM<\infty$.
We use induction on $c$.
When $c=0$, the module $M$ has finite projective dimension, and $\ured_R^{\pd}M=0$.
Let $c\ge1$.
Since $M$ has reducible complexity, there exists an exact sequence
\begin{equation}\label{21}
0\to M\to N\to \syz^nM\to 0
\end{equation}
in $\mod R$ such that $n\ge0$, $\cx_RN<c$, $\depth_RN=\depth_RM$ and $N$ has reducible complexity.
We can apply the induction hypothesis to $N$ to get $\ured_R^{\pd}N\le\cx_RN=:e$.
If $e=0$, then $N$ has finite projective dimension, and it follows from \eqref{21} that $\ured_R^{\pd}M\le1\le c$.
Let $e\ge1$.
Lemma \ref{redseq2} implies that there exist a family $\{0\to M_{i-1}\to M_i\to\syz^{n_i}M_{i-1}\to0\}_{i=2}^{e+1}$ of exact sequences in $\mod R$ with $n_i\ge0$, $M_1=N$ and $\pd_RM_{e+1}<\infty$.
Putting $n_1=n$ and $M_0=M$ to include \eqref{21} in this family, we get the family $\{0\to M_{i-1}\to M_i\to\syz^{n_i}M_{i-1}\to0\}_{i=1}^{e+1}$ of exact sequences in $\mod R$ with $n_i\ge0$, $M_0=M$ and $\pd_RM_{e+1}<\infty$.
It follows that $\ured_R^{\pd}M\le e+1\le c$.
\end{proof}

We recall the definition of an AB ring introduced by Huneke and Jorgensen \cite{HJ}.

\begin{dfn}
A commutative noetherian local ring $R$ is called {\em AB} if there exists an integer $n$ such that for all finitely generated $R$-modules $M,N$ with $\Ext^{\gg0}_R(M,N)=0$ one has $\Ext_R^{\ge n}(M,N)=0$.
\end{dfn}

It is asked in \cite[Question 2.2]{AC} whether a commutative noetherian local ring over which every finitely generated module has finite reducing projective dimension is a complete intersection.
We can prove the following result, which shows that the question is affirmative if we replace reducing projective dimension with upper reducing projective dimension, and that such a ring is AB.

\begin{cor}\label{6}
Let $(R,\m,k)$ be a commutative noetherian local ring.
Consider the following conditions.

{\rm(1)} $R$ is a complete intersection.\quad
{\rm(2)} All $M\in\mod R$ have reducible complexity.\quad
{\rm(3)} $\ured_R^{\pd}k<\infty$.

{\rm(4)} $\ured_R^{\pd}M<\infty$ for all $M\in\mod R$.\quad
{\rm(5)} $\red_R^{\pd}M<\infty$ for all $M\in\mod R$.\quad
{\rm(6)} $R$ is AB.\\
Then the implications {\rm(1)} $\Leftrightarrow$ {\rm(2)} $\Leftrightarrow$ {\rm(3)} $\Leftrightarrow$ {\rm(4)} $\Rightarrow$ {\rm(5)} $\Rightarrow$ {\rm(6)} hold.
\end{cor}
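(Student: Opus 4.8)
\textbf{Plan for Corollary \ref{6}.}
The plan is to establish the cycle $(1)\Rightarrow(2)\Rightarrow(4)\Rightarrow(3)\Rightarrow(1)$ together with the two implications $(4)\Rightarrow(5)$ and $(5)\Rightarrow(6)$; all but the last are short. The implication $(1)\Rightarrow(2)$ is exactly Remark \ref{5}. For $(2)\Rightarrow(4)$, given $M\in\mod R$ of reducible complexity, Theorem \ref{4} gives $\ured_R^{\pd}M=\cx_RM$, which is finite since reducible complexity forces finite complexity (Definition \ref{25}(2)). The implication $(4)\Rightarrow(3)$ is the case $M=k$, and $(4)\Rightarrow(5)$ is immediate from the inequality $\red_R^{\pd}M\le\ured_R^{\pd}M$. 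For $(3)\Rightarrow(1)$, Theorem \ref{4} yields $\cx_Rk\le\ured_R^{\pd}k<\infty$, and then a theorem of Gulliksen, to the effect that $R$ is a complete intersection whenever its residue field has finite complexity, closes the cycle.

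The substantive implication is $(5)\Rightarrow(6)$. Fix finitely generated $R$-modules $M$ and $N$ with $\Ext_R^{\gg0}(M,N)=0$; the aim is to bound $\sup\{i\mid\Ext_R^i(M,N)\ne0\}$ by a number depending only on $R$. Using $(5)$ and Lemma \ref{redseq}, pick short exact sequences $\{0\to M_{i-1}^{\oplus a_i}\to M_i\to\syz^{n_i}M_{i-1}^{\oplus b_i}\to0\}_{i=1}^{r}$ with $n_i\ge0$, $a_i,b_i>0$, $M_0=M$ and $\pd_RM_r<\infty$; these depend on $M$ alone. Applying $\Hom_R(-,N)$ to each sequence and using $\Ext_R^j(\syz^{n}X,N)\cong\Ext_R^{j+n}(X,N)$ for $j\ge1$, one first checks by induction on $i$ that $\Ext_R^{\gg0}(M_i,N)=0$ for every $i$, so each $q_i:=\sup\{j\ge1\mid\Ext_R^j(M_i,N)\ne0\}$ (read as $0$ when the set is empty) is finite.

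The key point is the monotonicity $q_{i-1}\le q_i$. If $q_{i-1}\ge1$, put $j=q_{i-1}$, so $\Ext_R^j(M_{i-1},N)\ne0$; were $j>q_i$, the long exact sequence of the $i$-th short exact sequence would make the connecting map $\Ext_R^j(M_{i-1},N)^{\oplus a_i}\to\Ext_R^{j+1+n_i}(M_{i-1},N)^{\oplus b_i}$ injective, forcing $\Ext_R^{j+1+n_i}(M_{i-1},N)\ne0$ with $j+1+n_i>q_{i-1}$, a contradiction. Hence $q_0\le q_1\le\cdots\le q_r\le\pd_RM_r$, and the Auslander--Buchsbaum formula gives $\pd_RM_r=\depth R-\depth_RM_r\le\depth R$. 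Thus $\Ext_R^i(M,N)=0$ for all $i>\depth R$, uniformly in $M$ and $N$, so $R$ is AB (one may take $n=\depth R+1$). I expect this monotonicity step to be the main obstacle: positivity of $a_i,b_i$ is what lets one descend nonvanishing from $\Ext_R^j(M_{i-1},N)^{\oplus a_i}$ to $\Ext_R^j(M_{i-1},N)$, and the shift $1+n_i>0$ is what contradicts $\Ext_R^{\gg0}(M_{i-1},N)=0$; the only ingredient external to this paper is Gulliksen's complete-intersection criterion used for $(3)\Rightarrow(1)$.
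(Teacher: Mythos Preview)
Your argument is correct and follows essentially the same route as the paper: the cycle $(1)\Rightarrow(2)\Rightarrow(4)\Rightarrow(3)\Rightarrow(1)$ via Remark~\ref{5}, Theorem~\ref{4}, and Gulliksen's criterion (cited in the paper as \cite[Theorem 8.1.2]{A}), together with the trivial $(4)\Rightarrow(5)$ and the AB argument for $(5)\Rightarrow(6)$ using Lemma~\ref{redseq} and the Auslander--Buchsbaum bound $\pd_RM_r\le\depth R$. The only difference is that for the key step $\sup\{i\mid\Ext_R^i(M,N)\ne0\}\le\pd_RM_r$ the paper simply cites the proof of \cite[5.1]{AC}, whereas you supply the monotonicity argument $q_{i-1}\le q_i$ explicitly; your version is self-contained and slightly cleaner in that it only proves the inequality actually needed.
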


\begin{proof}
It is evident that the implications (3) $\Leftarrow$ (4) $\Rightarrow$ (5) hold.
It follows from Remark \ref{5} that (1) implies (2).
Theorem \ref{4} particularly says that (2) implies (4).
If (3) holds, then Theorem \ref{4} implies that $k$ has finite complexity, and (1) follows by \cite[Theorem 8.1.2]{A}.
Thus (3) implies (1).

It remains to show that (5) implies (6).
Put $t=\depth R$.
Take nonzero finitely generated $R$-modules $M,N$ such that $\Ext^{\gg 0}_R(M,N)=0$.
By assumption, $r:=\red_R^{\pd}M$ is finite.
When $r=0$, we have $\pd_RM<\infty$, which implies $\pd_RM\le t$, and $\Ext_R^{>t}(M,N)=0$.
We may assume $r>0$.
Lemma \ref{19} gives exact sequences $\{0\to M_{i-1}^{\oplus a_i}\to M_i\to\syz^{n_i}M_{i-1}^{\oplus b_i}\to0\}_{i=1}^r$ in $\mod R$ with $n_i\ge0$, $a_i,b_i>0$, $M_0=M$ and $\pd_RM_r<\infty$.
By the proof of \cite[5.1]{AC} we have $\sup\{i\in\N\mid\Ext_R^i(M,N)\not=0\}=\sup\{i\in\N\mid\Ext_R^i(M_r,N)\not=0\}=\pd_R M_r\le t$.
Thus $R$ is AB.
\end{proof}

Here is an example of a ring $R$ admitting a module $M$ with $\ured_R^{\pd}M>\red_R^{\pd}M$.
This example also says that the reducing projective dimension version of Theorem \ref{4} does not hold in general.

\begin{ex}
Consider the ring $R=k[x,y]/(x^2,xy,y^2)$ with $k$ a field.
Then $\pd_Rk=\infty$ and there is an exact sequence $0\to k^{\oplus2}\to R\to k\to0$, whence $\red_R^{\pd}k=1$.
On the other hand, as $R$ is not a complete intersection, $\ured_R^{\pd}k=\infty$ by Corollary \ref{6}.
Therefore one has $\ured_R^{\pd}k>\red_R^{\pd}k$.
Moreover, since $\cx_Rk=\infty$ by \cite[Theorem 8.1.2]{A}, one has $\cx_Rk>\red_R^{\pd}k$.
\end{ex}

\section{Upper reducing Gorenstein dimension}\label{13}

In this section, we consider Gorenstein analogues of the results obtained in the previous section.

We recall the definition of plexity in the sense of Avramov \cite{Av}, and introduce Gorenstein versions of complexity and reducible complexity.

\begin{dfn}
Let $(R,\m,k)$ be a commutative noetherian local ring.
Denote by $\modo R$ the full subcategory of $\mod R$ consisting of modules that are locally free on the punctured spectrum of $R$.
\begin{enumerate}[(1)]
\item
The {\em plexity} $\px_RM$ of $M\in\mod R$ is by definition the infimum of integers $n\ge0$ such that there exists a real number $\alpha$ with $\mu_R^i(M)\le\alpha i^{n-1}$ for all $i\gg0$.
Here $\mu_R^i(M)$ denotes the $i$th Bass number of $R$, that is, $\mu_R^i(M)=\dim_k\Ext_R^i(k,M)$.
\item
Let $M\in\modo R$.
Then $\Ext_R^i(M,R)$ has finite length for all $i>0$, and one can define the {\em Gorenstein complexity} $\gcx_RM$ of $M$ as the infimum of integers $n\ge0$ such that there exists a real number $\alpha$ with $\ell_R(\Ext_R^i(M,R))\le\alpha i^{n-1}$ for all $i\gg0$.
\item
We define the full subcategory $\X$ of $\modo R$ inductively as follows.
Let $M\in\modo R$.
\begin{enumerate}[(a)]
\item
If $\gdim_RM<\infty$ (i.e., $\gcx_RM=0$), then $M\in\X$.
\item
If $0<\gcx_RM<\infty$ and there exists an exact sequence $0\to M\to N\to\syz^nM\to0$ in $\mod R$ with $n\ge0$ such that $\gcx_RN<\gcx_RM$, $\depth_RN=\depth_RM$ and $N\in\X$, then $M\in\X$.
\end{enumerate}
The modules belonging to $\X$ are said to have {\em reducible Gorenstein complexity}.
\end{enumerate}
Note that one has the equality $\gcx_Rk=\px_RR$, and that any module having reducible Gorenstein complexity has finite Gorenstein complexity.
\end{dfn}

\begin{rem}\label{27}
Let $(R,\m,k)$ be a commutative noetherian local ring.
\begin{enumerate}[(1)]
\item
If in the definition of Gorenstein complexity we replace the length function $\ell_R$ with the minimal number of generators function $\nu_R$, then $\gcx_RM$ coincides with the {\em complexity} $\cx_R(M,R)$ of the pair of modules $(M,R)$ in the sense of Avramov and Buchweitz \cite{ABu}.
When $R$ is artinian, one actually has $\gcx_RM=\cx_R(M,R)$ for each finitely generated $R$-module $M$ by \cite[Corollary 2.6]{DV}.
\item
One has:
$\text{$R$ is Gorenstein}\iff
\text{$\gcx_RM=0$ for all $M\in\modo R$}\iff
\text{$\gcx_Rk=0$}\iff
\text{$\px_RR=0$.}$
\end{enumerate}
\end{rem}

The following result is a Gorenstein version of Theorem \ref{4}.

\begin{prop}\label{7}
Let $R$ be a commutative noetherian local ring.
Let $M$ be a finitely generated $R$-module that is locally free on the punctured spectrum of $R$.
Then there is an inequality $\gcx_RM\le\ured_R^{\gdim}M$.
If $M$ has reducible Gorenstein complexity, then the equality $\gcx_RM=\ured_R^{\gdim}M$ holds.
\end{prop}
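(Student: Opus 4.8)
The plan is to run the proof of Theorem~\ref{4} essentially word for word, under the dictionary $\pd\rightsquigarrow\gdim$, $\cx\rightsquigarrow\gcx$, Betti number $\beta_i^R(-)=\dim_k\Tor_i^R(-,k)\rightsquigarrow\ell_R(\Ext_R^i(-,R))$, reducible complexity $\rightsquigarrow$ reducible Gorenstein complexity, with Lemma~\ref{redseq2} used unchanged. One preliminary point makes the dictionary meaningful: the subcategory $\modo R$ is closed under the operations occurring in the definition of $\ured^{\gdim}$. Indeed, if $L\in\modo R$ then $\syz^nL\in\modo R$ (a free resolution of $L$ localized at a nonmaximal prime at which $L$ is free has free syzygies, up to free summands), and an extension $0\to A\to B\to C\to0$ with $A,C\in\modo R$ has $B\in\modo R$ because localizing at $\mathfrak p\ne\m$ gives an extension of free $R_\mathfrak p$-modules, which splits. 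Hence in any family $\{0\to M_{i-1}\to M_i\to\syz^{n_i}M_{i-1}\to0\}$ with $M_0=M\in\modo R$, every $M_i$ lies in $\modo R$, so every $\gcx_RM_i$ is defined.

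For the inequality $\gcx_RM\le\ured_R^{\gdim}M$ I would induct on $r=\ured_R^{\gdim}M$, which may be assumed finite. If $r=0$ then $\gdim_RM<\infty$, whence $\Ext_R^i(M,R)=0$ for $i\gg0$ and $\gcx_RM=0$. If $r\ge1$, Lemma~\ref{redseq2} gives a family $\{0\to M_{i-1}\to M_i\to\syz^{n_i}M_{i-1}\to0\}_{i=1}^r$ with $M_0=M$ and $\gdim_RM_r<\infty$; then $\ured_R^{\gdim}M_1\le r-1$, so by induction $c:=\gcx_RM_1\le r-1$. Writing $n=n_1$ and $N=M_1$ and applying $\Hom_R(-,R)$ to $0\to M\to N\to\syz^nM\to0$, the resulting long exact sequence together with the dimension shift $\Ext_R^{i+1}(\syz^nM,R)\cong\Ext_R^{i+n+1}(M,R)$ for $i\ge0$ yields $\ell_R(\Ext_R^{j+n}(M,R))\le\ell_R(\Ext_R^{j-1}(M,R))+\ell_R(\Ext_R^{j}(N,R))$ for $j\gg0$ --- the precise analogue of $\beta_{i+n}^R(M)\le\beta_{i-1}^R(M)+\beta_i^R(N)$. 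Since $\gcx_RN=c$, one has $\ell_R(\Ext_R^j(N,R))\le\alpha j^{c-1}$ for $j\gg0$, and the telescoping computation~\eqref{26} goes through verbatim to give $\ell_R(\Ext_R^i(M,R))\le(\alpha+1)i^c$ for $i\gg0$, i.e. $\gcx_RM\le c+1\le r$.

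For the reverse inequality, assuming that $M$ has reducible Gorenstein complexity, I would first record that for $L\in\X$ one has $\gcx_RL=0$ if and only if $\gdim_RL<\infty$: the implication $\Leftarrow$ is the standard vanishing of $\Ext_R^{\gg0}(L,R)$, and $\Rightarrow$ holds because clause (b) in the definition of $\X$ requires $\gcx>0$, so $L$ must fall under clause (a). Now induct on $c=\gcx_RM<\infty$. If $c=0$ then $\gdim_RM<\infty$, so $\ured_R^{\gdim}M=0$. If $c\ge1$, reducibility provides an exact sequence $0\to M\to N\to\syz^nM\to0$ with $n\ge0$, $\gcx_RN<c$ and $N\in\X$; by induction $\ured_R^{\gdim}N\le\gcx_RN=:e$. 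If $e=0$ then $\gdim_RN<\infty$ and this sequence gives $\ured_R^{\gdim}M\le1\le c$. If $e\ge1$, Lemma~\ref{redseq2} produces $\{0\to M_{i-1}\to M_i\to\syz^{n_i}M_{i-1}\to0\}_{i=2}^{e+1}$ with $M_1=N$ and $\gdim_RM_{e+1}<\infty$, and prepending the sequence above as the $i=1$ term (with $M_0=M$, $n_1=n$) yields $\ured_R^{\gdim}M\le e+1\le c$.

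The two inductions and the telescoping estimate are routine, being copies of the arguments in Theorem~\ref{4}. The genuinely new ingredients, and the ones I expect to require the most care, are: (i) checking that $\modo R$ is preserved along the reduction sequences so that $\gcx$ remains defined on every intermediate module --- this is exactly why local freeness on the punctured spectrum is imposed as a hypothesis; and (ii) extracting the estimate $\ell_R(\Ext_R^{j+n}(M,R))\le\ell_R(\Ext_R^{j-1}(M,R))+\ell_R(\Ext_R^{j}(N,R))$ from the $\Ext_R^\bullet(-,R)$-sequence of $0\to M\to N\to\syz^nM\to0$, where one must be mildly careful with the low-degree terms (the shift $\Ext_R^{i+1}(\syz^nM,R)\cong\Ext_R^{i+n+1}(M,R)$ needs $i\ge0$, and $\Ext_R^0$ need not have finite length); since everything is used only for $i\gg0$, this is harmless.
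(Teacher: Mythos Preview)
Your proposal is correct and follows exactly the paper's approach: both run the proof of Theorem~\ref{4} through the dictionary $\pd\rightsquigarrow\gdim$, $\beta_i^R(-)\rightsquigarrow\ell_R(\Ext_R^i(-,R))$, using the same induction on $r=\ured_R^{\gdim}M$, the same $\Ext$ exact sequence $\Ext_R^{i-1}(M,R)\to\Ext_R^i(\syz^nM,R)\to\Ext_R^i(N,R)$, and the telescoping estimate~\eqref{26}. Your explicit verification that $\modo R$ is closed under syzygies and extensions is a welcome elaboration of a point the paper states in passing (``$M_1$ is locally free on the punctured spectrum'').
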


\begin{proof}
The proposition can be shown in a simlar way as in the proof of Theorem \ref{4}.

As to the first assertion of the proposition, we use induction on $r=\ured_R^{\gdim}M\in\N$.
If $r=0$, then $\gdim_RM<\infty$ and $\gcx_RM=0$.
For $r\ge1$, there exist exact sequences $\{0\to M_{i-1}\to M_i\to\syz^{n_i}M_{i-1}\to0\}_{i=1}^r$ in $\mod R$ such that $M_0=M$ and $\gdim_RM_r<\infty$.
Note then that $\ured_R^{\gdim}M_1\le r-1$ and that $M_1$ is locally free on the punctured spectrum of $R$.
The induction hypothesis implies $\gcx_RM_1\le r-1$.
Putting $n=n_1$, $N=M_1$ and $c=\gcx_RN$, we have an exact sequence $0\to M\to N\to\syz^nM\to0$ and $c\le r-1$.
There are a real number $\alpha>0$ and an integer $u>0$ such that $\ell_R(\Ext_R^i(N,R))\le\alpha i^{c-1}$ for all $i\ge u$.
The induced exact sequence $\Ext_R^{i-1}(M,R)\to\Ext_R^i(\syz^nM,R)\to\Ext_R^i(N,R)$ for each $i$ yields $\ell_R(\Ext_R^{i+n}(M,R))\le\ell_R(\Ext_R^{i-1}(M,R))+\alpha i^{c-1}$ for all $i\ge u$.
Making a computation similar to \eqref{26}, we can deduce that $\ell_R(\Ext_R^i(M,R))\le(\alpha+1)i^c$ for all $i\gg0$.
Thus $\gcx_RM\le c+1\le r$ as desired.

As for the second assertion of the proposition, in the proof of the second assertion of Theorem \ref{4}, we replace complexity with Gorenstein complexity, and projective dimension with Gorenstein dimension.
Then we are done.
\end{proof}

The corollary below is an immediate consequence of Proposition \ref{7}.

\begin{cor}\label{9}
Let $R$ be a commutative noetherian local ring.
Suppose that the residue field $k$ has finite upper reducing Gorenstein dimension.
Then $R$ has finite plexity: $\px_RR<\infty$.
\end{cor}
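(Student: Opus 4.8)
The plan is to apply Proposition \ref{7} to the residue field $k$, but one must first arrange that $k$ lies in $\modo R$ so that Gorenstein complexity is defined for it. This is immediate: $k$ is annihilated by $\m$, hence $k_\mathfrak{p}=0$ for every prime $\mathfrak{p}\ne\m$, so $k$ is (trivially) locally free on the punctured spectrum. Thus $k\in\modo R$ and $\gcx_Rk$ makes sense.

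Next I would invoke the hypothesis $\ured_R^{\gdim}k<\infty$ together with the inequality $\gcx_Rk\le\ured_R^{\gdim}k$ from Proposition \ref{7}. This gives $\gcx_Rk<\infty$. Finally, by the equality $\gcx_Rk=\px_RR$ recorded in the definition of Gorenstein complexity (and in Remark \ref{27}(2)), we conclude $\px_RR=\gcx_Rk<\infty$, which is exactly the assertion.

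So the argument is a two-line chain: $\px_RR=\gcx_Rk\le\ured_R^{\gdim}k<\infty$. There is essentially no obstacle here; the only point requiring a word is the observation that $k$ is locally free on the punctured spectrum so that Proposition \ref{7} applies to it. All the real work was done in establishing the identity $\gcx_Rk=\px_RR$ and in proving Proposition \ref{7}.
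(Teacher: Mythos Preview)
Your proposal is correct and is exactly the argument the paper has in mind; the paper merely states that the corollary is an immediate consequence of Proposition~\ref{7}, and you have spelled out the chain $\px_RR=\gcx_Rk\le\ured_R^{\gdim}k<\infty$ together with the (necessary) remark that $k\in\modo R$. One small quibble: the equality $\gcx_Rk=\px_RR$ is recorded immediately after the definitions, not in Remark~\ref{27}(2), which instead gives the characterization of Gorensteinness.
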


\begin{rem}
According to \cite[\S1]{CSV} and \cite[Question A]{S}, it is an open problem whether a commutative noetherian local ring $(R,\m,k)$ is Gorenstein if $\px_RR<\infty$.
If it turns out to be true, then Corollary \ref{9} and Remarks \ref{10}(2), \ref{27}(2) will imply that $R$ is Gorenstein if and only if $\ured_R^{\gdim}M<\infty$ for all $M\in\mod R$, if and only if $\ured_R^{\gdim}k<\infty$, which is a Gorenstein version of the equivalences (1) $\Leftrightarrow$ (3) $\Leftrightarrow$ (4) in Corollary \ref{6}.
\end{rem}


\end{document}